\documentclass[a4paper,12pt]{article}

\usepackage[latin1]{inputenc}
\usepackage[T1]{fontenc}
\usepackage{textcomp,graphicx,epsfig}
\usepackage{latexsym,amssymb,amsmath,amsthm}
\usepackage{fancyhdr,rotating,makeidx,eurosym}
\usepackage{lmodern,url,xspace,hyperref,layout}
\usepackage{stmaryrd}

\oddsidemargin 5 mm
\evensidemargin 5 mm
\textwidth 155 mm
\topmargin -1 cm
\textheight 22.6 cm

\def\RR{\mathbb{R}}

\def\NN{\mathbb{N}}
\def\ZZ{\mathbb{Z}}
\def\EE{\mathbb{E}}
\def\II{\mathbb{I}}
\def\dd{\mathrm{d}}
\def\ee{\mathrm{e}}
\def\ii{\mathrm{i}}

\newtheorem{defi}{Definition}[section]
\newtheorem{lemm}[defi]{Lemma}
\newtheorem{prop}[defi]{Proposition}

\newtheorem{theo}[defi]{Theorem}


\begin{document}

\title{G\"artner-Ellis condition for squared 
asymptotically stationary Gaussian processes}
\author{Marina Kleptsyna, Alain Le Breton, and Bernard Ycart}
\date{\today}
\maketitle
\begin{abstract}
The G\"artner-Ellis condition for the square of an asymptotically
stationary Gaussian process is established. The same limit holds for the
conditional distribution given any fixed initial point, which entails 
weak multiplicative ergodicity.  
The limit is shown to
be the Laplace transform of a convolution of Gamma distributions with
Poisson compound of exponentials. A
proof based on Wiener-Hopf factorization induces a probabilistic
interpretation of the limit in terms of a regression problem.
\end{abstract}
\vskip 2mm\noindent
{\bf Keywords:}
G\"artner-Ellis condition; Gaussian process; Laplace transform\\
{\bf MSC 2010:}
%
60G14; 60F10
%
\section{Introduction}
The convergence of the scaled
cumulant generating functions of a sequence of random variables 
implies a large
deviation principle; this is known as the G\"artner-Ellis
condition \cite[p.~43]{DemboZeitouni98}. Our main result establishes
that condition for the square of an asymptotically stationary Gaussian
process.
Reasons for studying squared Gaussian processes
come from different fields: large deviation theory
\cite{Yurinsky95,BrycDembo97}, time series analysis 
\cite{Hannan70}, or
ancestry dependent branching processes \cite{LouhichiYcart15}. 
Since only nonnegative real valued random variables are
considered here, we shall use logarithms of Laplace transforms,
instead of cumulant generating functions.
\begin{theo}
\label{th:main}
Let 
$(X_t)_{t\in\ZZ}$ be a Gaussian process, with
mean $m=(m(t))$ and covariance kernel $K=(K(t,s))$: for all $t,s\in\ZZ$,
$$
\EE[X_t]=m(t)
\quad\mbox{and}\quad
\EE[(X_t-m(t))(X_s-m(s))] = K(t,s)\;.
$$
Assume:
\begin{equation}
\tag{H1}
\sup_{t\in \ZZ} |m(t)| < +\infty\;;
\end{equation}
\begin{equation}
\tag{H2}
\sup_{t\geqslant 1}\,\max_{s=0}^{t-1} 
\sum_{r=0}^{t-1}|K(s,r)|<+\infty\;.
\end{equation}
Assume there exist a constant $m_\infty$ and a positive definite
symmetric function $k$ such that:
\begin{equation}
\tag{H3}
\sum_{t\in\ZZ} |k(t)| <\infty\;;
\end{equation}
\begin{equation}
\tag{H4}
\lim_{t\to+\infty} \frac{1}{t} \sum_{s=0}^{t-1}|m(s)-m_\infty| =
0\;;
\end{equation}
\begin{equation}
\tag{H5}
\lim_{t\to+\infty} \frac{1}{t} 
\sum_{s,r=0}^{t-1} \left|K(s,r)-k(r-s)\right| = 0\;.
\end{equation}
Denote by $f$ the spectral density of $k$:
\begin{equation}
\label{deff}
f(\lambda) = \sum_{t\in\ZZ} \ee^{\ii\lambda t}k(t)\;.
\end{equation} 
For $t\geqslant 0$, consider the following Laplace transform:
\begin{equation}
\label{def:Lt}
L_{t}(\alpha) = 
\EE\left[\exp\left(-\alpha \sum_{s=0}^{t-1} X_s^2\right)\right]\;.
\end{equation}
Then for all $\alpha\geqslant 0$, 
\begin{equation}
\label{limLt}
\lim_{t\to+\infty} \frac{1}{t}\log(L_{t}(\alpha))
= -\ell(\alpha)=-\ell_0(\alpha)-\ell_1(\alpha)
\;,
\end{equation}
with:
\begin{equation}
\label{l0}
\ell_0(\alpha) = 
\frac{1}{4\pi} \int_{0}^{2\pi} \log(1+2\alpha f(\lambda))\,\dd\lambda\;,
\end{equation}
and
\begin{equation}
\label{l1}
\ell_1(\alpha) = 
m_\infty^2\alpha \left(1+2\alpha f(0)\right)^{-1}\;.
\end{equation}
\end{theo}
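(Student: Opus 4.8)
The plan is to base everything on the exact Gaussian formula for the Laplace transform of a quadratic form. Let $\Sigma_t=\bigl(K(s,r)\bigr)_{s,r=0}^{t-1}$ and $\mu_t=\bigl(m(0),\dots,m(t-1)\bigr)^{\top}$ be the covariance matrix and mean vector of $(X_0,\dots,X_{t-1})$. Completing the square in the Gaussian integral ($I+2\alpha\Sigma_t$ being invertible because $\Sigma_t\succeq0$) gives
\begin{equation*}
\log L_t(\alpha)=-\tfrac12\log\det\bigl(I+2\alpha\Sigma_t\bigr)-\alpha\,\mu_t^{\top}\bigl(I+2\alpha\Sigma_t\bigr)^{-1}\mu_t\;.
\end{equation*}
Thus $\ell_0$ should emerge from the normalized log-determinant and $\ell_1$ from the normalized mean form. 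I would handle the two terms separately, and for each first replace the genuine objects built from $(K,m)$ by their stationary counterparts built from $(k,m_\infty)$, then pass to the limit. Throughout write $T_t=\bigl(k(r-s)\bigr)_{s,r=0}^{t-1}$, $A_t=I+2\alpha T_t$, $B_t=I+2\alpha\Sigma_t$ and $E_t=\Sigma_t-T_t$; note $A_t,B_t\succeq I$, so both inverses are contractions, and (H2) together with (H3) make the row sums of $|E_t|$ uniformly bounded while (H5) makes $\sum_{s,r}|E_t(s,r)|=o(t)$.

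For the determinant, differentiating along the segment $A_t+2\alpha\epsilon E_t=I+2\alpha\bigl((1-\epsilon)T_t+\epsilon\Sigma_t\bigr)$ (which is $\succeq I$ for $\epsilon\in[0,1]$) yields
\begin{equation*}
\log\det B_t-\log\det A_t=2\alpha\int_0^1\operatorname{tr}\!\bigl((A_t+2\alpha\epsilon E_t)^{-1}E_t\bigr)\,\dd\epsilon\;,
\end{equation*}
and since the interpolating inverse has entries bounded by $1$ in modulus, the right-hand side is at most $2\alpha\sum_{s,r}|E_t(s,r)|=o(t)$. It then remains to apply Szeg\H{o}'s first limit theorem to the symbol $1+2\alpha f\geq1$, which gives $\tfrac{1}{2t}\log\det A_t\to\ell_0(\alpha)$.

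For the mean term I would carry out two replacements. First, setting $\nu_t=\mu_t-m_\infty\mathbf 1$, assumption (H1) bounds $\nu_t$ entrywise and (H4) gives $\|\nu_t\|_1=o(t)$, hence $\|\nu_t\|_2^2=o(t)$; since $B_t^{-1}$ is a contraction, both $\nu_t^{\top}B_t^{-1}\nu_t$ and $\mathbf 1^{\top}B_t^{-1}\nu_t$ are $o(t)$, so $\mu_t^{\top}B_t^{-1}\mu_t=m_\infty^2\,\mathbf 1^{\top}B_t^{-1}\mathbf 1+o(t)$. Second, the resolvent identity gives $\mathbf 1^{\top}(B_t^{-1}-A_t^{-1})\mathbf 1=-2\alpha(A_t^{-1}\mathbf 1)^{\top}E_t(B_t^{-1}\mathbf 1)$, which I would bound by $\|A_t^{-1}\mathbf 1\|_\infty\,\|B_t^{-1}\mathbf 1\|_2\,\|c_t\|_2$, where $c_t$ collects the column sums of $|E_t|$; here $\|B_t^{-1}\mathbf 1\|_2=O(\sqrt t)$ and $\|c_t\|_2=o(\sqrt t)$ (each column sum is bounded and they sum to $o(t)$), so this is $o(t)$ provided $\|A_t^{-1}\mathbf 1\|_\infty=O(1)$. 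Finally $\tfrac1t\mathbf 1^{\top}A_t^{-1}\mathbf 1\to(1+2\alpha f(0))^{-1}$: the all-ones vector is the zero-frequency mode, on which the circulant matrix with symbol $1+2\alpha f$ acts by multiplication by $1+2\alpha f(0)$, and the Toeplitz--circulant discrepancy is $o(t)$ in this bilinear form. Multiplying by $\alpha m_\infty^2$ yields $\ell_1(\alpha)$.

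The main obstacle is precisely the uniform bound $\|A_t^{-1}\mathbf 1\|_\infty=O(1)$, equivalently that the inverse of the Toeplitz matrix with symbol $1+2\alpha f$ has uniformly summable rows. For small $\alpha$ this follows from a convergent Neumann series, but for large $\alpha$ the series diverges and one needs inverse-closedness of the Wiener algebra (Wiener's lemma): since $k\in\ell^1$ and $1+2\alpha f\geq1$, the reciprocal $1/(1+2\alpha f)$ again has absolutely summable Fourier coefficients. This is exactly the point at which a Wiener--Hopf factorization $1+2\alpha f=\phi_+\phi_-$ is natural: it delivers the uniform control of $A_t^{-1}$ and, simultaneously, identifies the limiting bilinear form with the solution of the associated prediction/regression problem, which is the probabilistic interpretation announced in the abstract.
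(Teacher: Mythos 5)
Your two reduction steps are correct and constitute a genuinely different route from the paper's: the interpolation bound $\log\det B_t-\log\det A_t=2\alpha\int_0^1\operatorname{tr}\bigl((A_t+2\alpha\epsilon E_t)^{-1}E_t\bigr)\dd\epsilon=o(t)$ is valid (the interpolated matrix is $\succeq I$, so its inverse has entries bounded by $1$, and (H5) does the rest), and the two replacements in the mean term ($\mu_t\to m_\infty\mathbf 1$ by (H1)+(H4), then $B_t^{-1}\to A_t^{-1}$ by the resolvent identity) are correctly estimated. The paper instead builds an $L^\infty$/$L^1$ asymptotic-equivalence calculus (its Section \ref{Szego}) and computes the stationary limits through a Wiener--Hopf/Cholesky system with a regression interpretation (Proposition \ref{prop:asymptgt}). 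However, your proof has one genuine gap, which you yourself flag as ``the main obstacle'': the uniform bound $\sup_t\|A_t^{-1}\mathbf 1\|_\infty=O(1)$ for \emph{all} $\alpha\geqslant 0$, on which both your resolvent estimate and your final Toeplitz--circulant step rest. The justification you offer --- Wiener's lemma --- does not deliver it. Wiener's lemma says that $1/(1+2\alpha f)$ has absolutely summable Fourier coefficients, i.e.\ it controls the inverse of the bi-infinite Laurent operator (and, after a Wiener--Hopf factorization, of the semi-infinite Toeplitz operator). But $A_t^{-1}$ is the inverse of a \emph{finite section}: it is not Toeplitz, its entries are not the Fourier coefficients of $1/(1+2\alpha f)$, and uniform boundedness of its absolute row sums is strictly stronger than invertibility of the infinite operator (there are symbols for which the infinite operator is invertible while finite sections behave badly). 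The statement you need is Baxter's theorem (the finite section method in the Wiener algebra, Gohberg--Feldman; see \cite{BottcherSilbermann99}): for a Wiener-algebra symbol with no zeros and winding number zero, the truncations $T_t(a)$ are eventually invertible with $\sup_t\|T_t(a)^{-1}\|_{\ell^1\to\ell^1}<\infty$, which for symmetric matrices equals the $\ell^\infty\to\ell^\infty$ bound. Invoking that result closes your argument; the sentence ``a Wiener--Hopf factorization delivers the uniform control'' does not, as it stands, prove anything.

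It is worth noting how the paper avoids ever needing this all-$\alpha$ bound: it proves the limit only for $0\leqslant\alpha<1/(2M)$ (where your Neumann-series bound suffices, so your argument is complete there too), and then extends to all $\alpha\geqslant 0$ by a soft probabilistic step (Section \ref{AD}): both $(L_t(\alpha))^{1/t}$ and $\ee^{-\ell(\alpha)}$ are Laplace transforms of probability measures on $\RR^+$ --- convolutions of Gamma distributions with Poisson compounds of exponentials --- so the variant of L\'evy's continuity theorem (Lemma \ref{lem:levy}) upgrades convergence on an interval $[0,\alpha_0)$ to convergence everywhere. If you prefer not to rely on Baxter's theorem, you can graft exactly this continuity argument onto your proof: your perturbation steps are complete for $\alpha<1/(2M)$, and your determinant part is even valid for all $\alpha$ directly via Szeg\H{o}'s theorem, so only the mean term needs the extension.
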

Theorem \ref{th:main} yields as
a particular case the following result of weak multiplicative ergodicity.
\begin{prop}
\label{prop:main}
Under the hypotheses of Theorem \emph{\ref{th:main}}, consider:
\begin{equation}
\label{def:Lxt}
L_{x,t}(\alpha) = 
\EE_x\left[\exp\left(-\alpha \sum_{s=0}^{t-1} X_s^2\right)\right]\;,
\end{equation}
where $\EE_x$ denotes the conditional expectation given $X_0=x$.\\
Then for all $\alpha\geqslant 0$ and all $x\in \RR$, 
$$
\lim_{t\to+\infty} \frac{1}{t}\log(L_{x,t}(\alpha))
= -\ell(\alpha)
\;,$$
where $\ell$ is defined by \emph{(\ref{limLt})}, \emph{(\ref{l0})}, 
and \emph{(\ref{l1})}.
\end{prop}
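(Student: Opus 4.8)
The plan is to deduce Proposition \ref{prop:main} directly from Theorem \ref{th:main} by observing that conditioning a Gaussian process on the single value $X_0=x$ produces another Gaussian process, which I will show lies in the same class with the \emph{same} asymptotic parameters $m_\infty$ and $k$. Since $\ell$ is determined solely by $m_\infty$ and by the spectral density $f$ of $k$, the conditional limit must then coincide with the unconditional one. First I would dispose of the degenerate case $K(0,0)=0$: there $X_0=m(0)$ almost surely, so $\EE_x$ is meaningful only for $x=m(0)$, in which case $L_{x,t}=L_t$ and the conclusion is immediate from Theorem \ref{th:main}. Assume henceforth $K(0,0)>0$.

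Under $X_0=x$, the Gaussian regression formulas give a conditional process $(\tilde X_t)$ whose mean and covariance are
\begin{equation*}
\tilde m(t)=m(t)+\frac{K(t,0)}{K(0,0)}\bigl(x-m(0)\bigr),
\qquad
\tilde K(t,s)=K(t,s)-\frac{K(t,0)K(0,s)}{K(0,0)}\;.
\end{equation*}
The corrections to $m$ and $K$ are rank-one and governed entirely by the single sequence $(K(\cdot,0))$, which I would control first. Applying (H2) with $s=0$ yields $\sup_{t}\sum_{r=0}^{t-1}|K(0,r)|<+\infty$, hence $\sum_{r\geqslant 0}|K(0,r)|<+\infty$; together with the symmetry of $K$ this gives $\sup_{t\geqslant0}|K(t,0)|<+\infty$. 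Since $L_{x,t}(\alpha)$ depends on $(\tilde X_s)$ only through the indices $0,\dots,t-1$, it is precisely this summability on nonnegative indices that drives the whole argument.

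With this in hand I would verify (H1)--(H5) for $(\tilde X_t)$ in turn, all with the unchanged $m_\infty$ and $k$. Hypotheses (H1) and (H2) hold because the rank-one correction only adds bounded quantities to already bounded sums, and (H3) is untouched since $k$ is the same. The substance is in (H4) and (H5), where the extra terms are Cesàro-negligible: $\frac1t\sum_{s=0}^{t-1}|K(s,0)|\to 0$ because it is a convergent series divided by $t$, and $\frac1t\sum_{s,r=0}^{t-1}|K(s,0)|\,|K(0,r)|=\frac1t\bigl(\sum_{r=0}^{t-1}|K(0,r)|\bigr)^2=O(1/t)\to0$. Thus $(\tilde X_t)$ satisfies the hypotheses of Theorem \ref{th:main} with the same $\ell$, and since $L_{x,t}(\alpha)=\EE[\exp(-\alpha\sum_{s=0}^{t-1}\tilde X_s^2)]$ is exactly the Laplace transform (\ref{def:Lt}) of $(\tilde X_t)$, the theorem gives $\frac1t\log L_{x,t}(\alpha)\to-\ell(\alpha)$. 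I expect the only genuine work to be establishing the Cesàro-negligibility of the rank-one corrections in (H4)--(H5); the remaining verifications are routine bookkeeping.
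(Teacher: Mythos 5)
Your proposal is correct and follows essentially the same route as the paper: both replace the conditional law given $X_0=x$ by the Gaussian process with the rank-one (Schur-complement) corrections to $(m,K)$, and both show these corrections are Ces\`aro-negligible via the identical key bounds $\frac{1}{t}\sum_{s=0}^{t-1}|K(0,s)|\to 0$ and $\frac{1}{t}\bigl(\sum_{s=0}^{t-1}|K(0,s)|\bigr)^{2}\to 0$, which rest on (H2) at $s=0$ plus symmetry. The only cosmetic difference is that you verify (H1)--(H5) for the conditional process and invoke Theorem~\ref{th:main} as a black box (also handling the degenerate case $K(0,0)=0$, which the paper skips), whereas the paper directly establishes the asymptotic equivalences $m_{x,t}\sim c_t$ and $K^\bullet_t\sim H_t$ on which the proof of the theorem runs.
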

The analogue for finite state Markov chains has long been know 
\cite[p.~72]{DemboZeitouni98}. It was extended to  
strong multiplicative ergodicity of exponentially converging Markov
chains by Meyn and his
co-workers: see \cite{KontoyiannisMeyn03}. 
In \cite{Kleptsynaetal14}, the square of a Gauss-Markov
process was studied, strong multiplicative ergodicity was
proved, and the limit was explicitly computed. This motivated the
present  generalization.

The particular case of a centered stationary
process ($m(t)=0,\, K(t,s)=k(t-s)$) can be considered as classical:
in that case, the limit (\ref{l0}) follows from Szeg\H{o}'s theorem on
Toeplitz matrices:
see \cite{GrenanderSzego84}, \cite{BottcherSilbermann99} as a general
reference on Toeplitz matrices, and \cite{Bingham12} for a review
of probabilistic applications of Szeg\H{o}'s theory. The extension to the
centered asymptotically stationary case follows from the notion of
asymptotically equivalent matrices, in the $L^2$ sense: see 
section 7.4 p.~104 of \cite{GrenanderSzego84}, and
\cite{Gray06}. The noncentered stationary case ($m(t)=m_\infty$ and
$K(s,t)=k(s-t)$) has received much less
attention. In Proposition 2.2 of \cite{BrycDembo97}, the Large Deviation
Principle is obtained for a squared noncentered stationary
Gaussian process. There, the centered case is deduced from Szeg\H{o}'s
theorem, while the noncentered case follows from the contraction
principle. 

A different approach to the noncentered stationary case
is proposed here. Instead of the spectral
decomposition and Szeg\H{o}'s theorem, a Wiener-Hopf factorization is
used.
The limits (\ref{l0}) and (\ref{l1}) are both deduced from the asymptotics
of that factorization. 
The technique is close to those developed
in \cite{Kleptsynaetal02}, that were used in
\cite{Kleptsynaetal14}. One advantage is that the coefficients of the Wiener-Hopf
factorization can be given a probabilistic interpretation in terms of a
regression problem. This approach will be detailed in section
\ref{Kleptsyna}.

To go from the stationary to the asymptotically stationary case,
asymptotic equivalence of matrices is needed. But the
classical $L^2$ definition of \cite[section 2.3]{Gray06} does not
suffice for the noncentered 
case. A stronger notion, linked to the $L^1$ norm of vectors instead of the $L^2$
norm, will be developed in section \ref{Szego}. 

Joining the stationary case to asymptotic equivalence, one gets the
conclusion of Theorem \ref{th:main}, but only for small enough values
of $\alpha$. To
deduce that the convergence holds for all $\alpha\geqslant 0$, an extension of
L\'evy's continuity theorem will be used: if both $(L_t(\alpha))^{1/t}$ and
$\ee^{-\ell(\alpha)}$ are Laplace transforms of
probability distributions on $\RR^+$, then the convergence 
over an interval implies weak convergence of measures, hence
the convergence of Laplace transforms for all $\alpha\geqslant 0$. Actually, 
$(L_t(\alpha))^{1/t}$ and $\ee^{-\ell(\alpha)}$ both
are Laplace transforms of infinitely
divisible distributions, more precisely 
convolutions of Gamma distributions with Poisson compounds of
exponentials. Details will be given in section   
\ref{AD}, together with the particular case of a Gauss-Markov process.
\section{The stationary case}
\label{Kleptsyna}
This section treats the stationary case: $m(t)=m_\infty$ and
$K(s,t)=k(t-s)$. We shall denote
by $c_t=(m_\infty)_{s=0,\ldots,t-1}$ the constant vector with
coordinates all equal to $m_\infty$, and
by $H_t$ the Toeplitz matrix with symbol $k$:
$H_t=(k(s-r))_{s,r=0,\ldots,t-1}$.
The main result of this section 
is a particular case of Theorem \ref{th:main}. It
entails Proposition 2.2 of Bryc and Dembo
\cite{BrycDembo97}.
\begin{prop}
\label{stationarycase}
Assume $k$ is a positive definite symmetric function such that
$$
\sum_{t\in\ZZ}|k(t)|=M<+\infty\;,
$$
and denote by $f$ the corresponding spectral density:
$$
f(\lambda) = \sum_{t\in\ZZ} \ee^{\ii\lambda t}k(t)\;.
$$
Let $Z=(Z_t)_{t\in\ZZ}$
be a centered 
stationary  process with covariance function $k$. Let $m_\infty$
be a real. For all $\alpha$ such that $0\leqslant \alpha<1/(2M)$, 
$$
\lim_{t\to +\infty}
\frac{1}{t}\log\left(\EE\left[\exp\left(-\alpha 
\sum_{s=0}^{t-1} (Z_s+m_\infty)^2\right)\right]\right)
=-\ell_0(\alpha)-\ell_1(\alpha)\;,
$$
where $\ell_0(\alpha)$ and $\ell_1(\alpha)$ are defined
by \emph{(\ref{l0})} 
and \emph{(\ref{l1})}.
\end{prop}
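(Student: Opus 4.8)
The plan is to compute the Laplace transform explicitly. For fixed $t$, the quantity inside the expectation is a quadratic form in the Gaussian vector $Z=(Z_0,\ldots,Z_{t-1})$ shifted by $c_t$: writing $S_t=\sum_{s=0}^{t-1}(Z_s+m_\infty)^2=(Z+c_t)^\top(Z+c_t)$, and recalling that $Z\sim\mathcal N(0,H_t)$, I would use the standard Gaussian integral formula for $\EE[\exp(-\alpha\,(Z+c_t)^\top(Z+c_t))]$. For $\alpha$ small enough that $I+2\alpha H_t$ is positive definite (which is guaranteed when $2\alpha M<1$, since $\|H_t\|\le M$ by (H2)-type bounds on the symbol), completing the square gives

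\begin{equation}
\label{plan:exact}
\EE\!\left[\exp\!\left(-\alpha S_t\right)\right]
=\det\!\left(I+2\alpha H_t\right)^{-1/2}
\exp\!\left(-\alpha\, c_t^\top\!\left(I+2\alpha H_t\right)^{-1}c_t\right)\;.
\end{equation}

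Taking $\tfrac1t\log$ of \eqref{plan:exact} splits the problem into two independent limits, matching the decomposition $\ell=\ell_0+\ell_1$: the determinant term should converge to $\ell_0(\alpha)$, and the quadratic form term to $\ell_1(\alpha)$.

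For the determinant term, I would write $\tfrac1t\log\det(I+2\alpha H_t)=\tfrac1t\sum_j\log(1+2\alpha\lambda_j^{(t)})$ where $\lambda_j^{(t)}$ are the eigenvalues of the Toeplitz matrix $H_t$. This is exactly the setting of Szeg\H{o}'s first limit theorem for Toeplitz matrices with integrable symbol: since $H_t$ has symbol $f$ (with $k\in\ell^1$ by hypothesis), the empirical distribution of eigenvalues converges, and for the continuous test function $x\mapsto\log(1+2\alpha x)$ (continuous and bounded on the spectrum, which lies in $[0,2M]$ by positive-definiteness of $k$) one obtains $\tfrac1t\sum_j\log(1+2\alpha\lambda_j^{(t)})\to\tfrac1{2\pi}\int_0^{2\pi}\log(1+2\alpha f(\lambda))\,\dd\lambda$. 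This is precisely $2\ell_0(\alpha)$, and after the factor $-\tfrac12$ from the exponent in \eqref{plan:exact} it yields $-\ell_0(\alpha)$. This step is essentially a citation to Szeg\H{o}'s theorem.

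For the quadratic form term, the target limit is $\tfrac1t\cdot\alpha\,c_t^\top(I+2\alpha H_t)^{-1}c_t\to m_\infty^2\alpha(1+2\alpha f(0))^{-1}=\ell_1(\alpha)$, since $c_t=m_\infty\mathbf 1_t$. This reduces to showing $\tfrac1t\,\mathbf 1_t^\top(I+2\alpha H_t)^{-1}\mathbf 1_t\to(1+2\alpha f(0))^{-1}$, i.e. that the normalized sum of all entries of the inverse converges to the reciprocal of the symbol evaluated at $\lambda=0$. I expect this to be the main obstacle, since it is not a standard Szeg\H{o} trace statement but rather a statement about the action of the inverse Toeplitz operator on the constant vector. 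The cleanest route is the Wiener-Hopf factorization advertised in the introduction: factor the symbol $1+2\alpha f(\lambda)=|g(\lambda)|^2$ with $g$ an outer function, interpret $(I+2\alpha H_t)^{-1}$ via the associated causal filter, and identify $\tfrac1t\mathbf 1_t^\top(I+2\alpha H_t)^{-1}\mathbf 1_t$ as (asymptotically) the prediction-error/regression coefficient governing the response of the filter to a constant input — which at the constant frequency $\lambda=0$ is exactly $(1+2\alpha f(0))^{-1}$. Making this precise, controlling boundary effects of the finite Toeplitz truncation, and justifying the passage to the limit uniformly in the relevant $\alpha$-range is where the real work lies; I would lean on the asymptotic behaviour of the Wiener-Hopf coefficients (as in \cite{Kleptsynaetal02,Kleptsynaetal14}) to pin down the constant-input response and thereby obtain $\ell_1(\alpha)$.
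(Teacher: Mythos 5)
Your reduction via the explicit Gaussian formula (\ref{explicitLxt}) and the split into the determinant limit (\ref{liml0tstat}) and the quadratic-form limit (\ref{liml1tstat}) is exactly the paper's starting point, and your treatment of the determinant term by citing Szeg\H{o}'s first limit theorem is legitimate (the paper itself notes this as a valid alternative; its own proof instead extracts (\ref{liml0tstat}) from the same Wiener--Hopf factorization it uses for $\ell_1$). Incidentally, $I_t+2\alpha H_t$ is positive definite for \emph{all} $\alpha\geqslant 0$ since $H_t\succcurlyeq 0$; the restriction $\alpha<1/(2M)$ is not needed there, but only later for the Neumann-series/analyticity arguments. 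The genuine gap is the second limit: you state the correct target $\frac{1}{t}\,c_t^*(I_t+2\alpha H_t)^{-1}c_t\to m_\infty^2(1+2\alpha f(0))^{-1}$ and name the right tool (Wiener--Hopf factorization), but you defer the entire argument (``where the real work lies''). That deferred work is the heart of the proposition, and the heuristic as stated does not close on its own: entrywise convergence of $(I_t+2\alpha H_t)^{-1}$ to the inverse of the infinite Toeplitz operator is useless here, because the quadratic form aggregates all $t^2$ entries of the inverse, so one needs summable ($\ell^1$) control of the truncation error, not pointwise control; moreover the ``response at frequency $0$'' must be evaluated at a boundary point of the relevant analyticity domain, which requires a genuine limiting argument.

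What fills this gap in the paper is Proposition \ref{prop:asymptgt}, used through the Cholesky identity (\ref{fqA}): writing $(I_t+2\alpha H_t)^{-1}=G_t^*D_t^{-1}G_t$ gives
$$
c_t^*(I_t+2\alpha H_t)^{-1}c_t=m_\infty^2\sum_{\tau=0}^{t-1}\frac{1}{g_\tau(0)}\left(\sum_{s=0}^{\tau}g_\tau(s)\right)^2\;,
$$
where each row $(g_\tau(s))_{s=0,\ldots,\tau}$ solves the finite system (\ref{systgt}). The proposition then (i) shows these rows converge to the solution $g$ of the semi-infinite equation (\ref{systginfty}) \emph{with convergence of the sums}, via the bound $\sum_{s\leqslant t}|g(s)-g_t(s)|\leqslant \frac{2\alpha M}{1-2\alpha M}\sum_{r>t}|g(r)|$ --- this is exactly where $\alpha<1/(2M)$ enters; and (ii) evaluates $g(0)$ and $\sum_s g(s)$ by contour integration of the factorization identity $\bar{F}^+(\zeta)(1+2\alpha\bar{f}(\zeta))=1-\bar{F}^-(\zeta)$, where $\sum_s g(s)=\bar{F}^+(1)$ requires the boundary limit $\zeta_0\to 1$ that produces the extra factor $\frac{1}{2}\log(1+2\alpha f(0))$ in (\ref{sumginfty}); combining, $\frac{1}{g(0)}\left(\sum_s g(s)\right)^2=(1+2\alpha f(0))^{-1}$, and Ces\`aro convergence finishes. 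Alternatively, your ``constant-input response'' intuition is made rigorous in the paper by Proposition \ref{prop:ct} together with Lemma \ref{lem:AEvec}: $c_t$ is an approximate eigenvector of $H_t$ in the $L^1$ asymptotic-equivalence sense, whence $(I_t+2\alpha H_t)^{-1}c_t\sim(1+2\alpha f(0))^{-1}c_t$; this needs the Ces\`aro tail estimate on $k$ plus a power-series argument for $F(z)=(1+2\alpha z)^{-1}$, again only valid for $\alpha<1/(2M)$. Either route must actually be written out; as it stands your proposal proves the $\ell_0$ part but only conjectures the $\ell_1$ part.
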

Denote by $m_t$ and $K_t$ the mean and covariance matrix of the vector 
$(X_s)_{s=0,\ldots,t-1}$.
The Laplace transform of the squared norm of 
a Gaussian vector has a well known explicit
expression: see for instance \cite[p.~6]{Yurinsky95}. The identity
matrix indexed by $0,\ldots,t-1$ is denoted by $I_t$,
the transpose of a vector $m$ is denoted by $m^*$.
\begin{equation}
\label{explicitLxt}
L_{t}(\alpha) = (\mathrm{det}(I_t+2\alpha K_t))^{-1/2}
\exp(-\alpha m^*_{t}(I_t+2\alpha K_t)^{-1}m_{t})\;,
\end{equation}
In the stationary case, $m_t=c_t$ and $K_t=H_t$.
From (\ref{explicitLxt}), we must prove that the following two limits hold.
\begin{equation}
\label{liml0tstat}
\lim_{t\to +\infty} \frac{1}{2t}\log(\mathrm{det}(I_t+2\alpha H_t)) 
= \ell_0(\alpha)= 
\frac{1}{4\pi} \int_{0}^{2\pi} \log(1+2\alpha
f(\lambda))\,\dd\lambda\;,
\end{equation}
and
\begin{equation}
\label{liml1tstat}
\lim_{t\to +\infty} \frac{\alpha}{t} 
c^*_{t}(I_t+2\alpha H_t)^{-1}c_{t} = 
\ell_1(\alpha)
=
m_\infty^2\alpha(1+2\alpha f(0))^{-1}\;.
\end{equation}
Here, $I_t+2\alpha H_t$ will be interpreted as the covariance matrix
of the random vector $(Y_s)_{s=0,\ldots,t-1}$, from the process
\begin{equation}
\label{modY}
Y=\varepsilon+\sqrt{2\alpha} Z\;,
\end{equation}
where 
$\varepsilon=(\varepsilon_t)_{t\in\ZZ}$ is a sequence of 
i.i.d. standard normal random variables, independent from
$Z$. The limits (\ref{liml0tstat}) and (\ref{liml1tstat}) will be
deduced from a Cholesky decomposition of $I_t+2\alpha H_t$. We begin
with an arbitrary positive definite matrix $A$.
The Cholesky decomposition writes it as the product of a
lower triangular matrix by its transpose. Thus $A^{-1}$ is the
product of an upper triangular matrix by its transpose. Write it
as $A^{-1}=T^*DT$, where $T$ is a unit lower triangular matrix
(diagonal coefficients equal to $1$), and
$D$ is a diagonal matrix with positive coefficients. Denote by $G$ the
lower triangular matrix $DT$. Then $GA=(T^*)^{-1}$ is a unit upper
triangular matrix. Hence the
coefficients $G(s,r)$ of $G$ are uniquely determined by the following
system of linear equations. For $0\leqslant s\leqslant t$,
\begin{equation}
\label{sysGA}
\sum_{r=0}^t G(t,r)\,A(r,s) = \delta_{t,s}\;,
\end{equation}
where $\delta_{t,s}$ denotes the Kronecker symbol: $1$ if $t=s$, $0$
else. Notice that $A^{-1}=G^*D^{-1}G$, and $TAT^{*}=D^{-1}$, where
$D$ is the diagonal matrix with diagonal entries $G(s,s)$. In
particular,
\begin{equation}
\label{detA}
\mathrm{det(A)} = \left(\prod_{s} G(s,s)\right)^{-1}\;, 
\end{equation}
and for any vector $m=(m(r))$,
\begin{equation}
\label{fqA}
m^* A^{-1} m = \sum_s \frac{1}{G(s,s)} 
\left(\sum_{r=0}^s G(s,r)m(r)\right)^{2}\;. 
\end{equation}
Here is the probabilistic interpretation of the coefficients
$G(t,s)$. Consider a centered Gaussian vector $Y$ with covariance
matrix $A$. For $t=0,\ldots,n$, denote by 
$\mathcal{Y}_{\llbracket 0,t\rrbracket }$ the
$\sigma$-algebra generated 
by $Y_0,\ldots,Y_{t}$, and by $\nu_t$ the partial innovation:
$$
\nu_t = Y_t -\EE[Y_t\,|\,\mathcal{Y}_{\llbracket 0,t-1\rrbracket}]\;,
$$
with the convention $\nu_0=Y_0$. Using elementary properties of
Gaussian vectors, it is easy to check that:
\begin{equation}
\label{inno}
\nu_t = \frac{1}{G(t,t)} \sum_{r=0}^{t} G(t,r)\, Y_r\;.
\end{equation}
Moreover, the $\nu_t$'s are independent, and the variance of $\nu_t$
is $1/G(t,t)$. 

When this is applied to
$A=I_t+2\alpha H_t$, another interesting
interpretation arises. For $t=0,\ldots,n$ $(G(t,s))_{s=0,\ldots,t}$ is
the unique solution to the system:
\begin{equation}
\label{systG}
G(t,s) +2\alpha \sum_{r=0}^t G(t,r)\, k(r-s)= \delta_{t,s}\;.
\end{equation}
Observe that the equations (\ref{systG}) are the normal
equations of the regression of the $\varepsilon_t$'s over the $Y_t$'s
in the model (\ref{modY}). Actually, since
$\EE[Y_rY_s]=\delta_{s,r}+2\alpha k(r-s)$ and 
$\EE[\varepsilon_tY_s]=\delta_{t,s}$, setting
\begin{equation}
\label{mut}
\mu_t = G(t,t)\,\nu_t = \sum_{r=0}^t G(t,r)\, Y_r\;,
\end{equation}
equation (\ref{systG}) says that for $s=0,\ldots, t$,
$$
\EE[\mu_t Y_s] = \EE[\varepsilon_t Y_s]\;.
$$
This means that:
$$
\mu_t = \EE[\varepsilon_t \,|\, \mathcal{Y}_{\llbracket 0,t\rrbracket}\,]\;.
$$
Obviously, the $\mu_t$'s are independent, the variance of $\mu_t$ is
$G(t,t)$ and the filtering error is:
$$
\EE[ (\varepsilon_t -\mu_t)^2] = 1-G(t,t)\;.
$$ 
In particular, it follows that $0<G(t,t)<1$.

The asymptotics of $G(t,s)$ will now be related 
to the spectral density $f$.
Denote $g_t(s)=G(t,t-s)$. A change
of index in (\ref{systG}) shows that $(g_t(s))_{s=0,\ldots,t}$ is the
unique solution to the system:
\begin{equation}
\label{systgt}
g_t(s) +2\alpha \sum_{r=0}^t g_t(r)\,k(s-r) = \delta_{s,0}\;.
\end{equation}
\begin{prop}
\label{prop:asymptgt}
Assume $k$ is a positive definite symmetric function such that
$$
\sum_{t\in\ZZ}|k(t)|=M<+\infty\;,
$$
and denote by $f$ the corresponding spectral density:
$$
f(\lambda) = \sum_{t\in\ZZ} \ee^{\ii\lambda t}k(t)\;.
$$
For all $\alpha$ such that $0\leqslant \alpha<1/(2M)$, 
the following equation has a unique solution
in $L^1(\ZZ)$.
\begin{equation}
\label{systginfty}
g(s) +2\alpha \sum_{r=0}^{+\infty} g(r)\,k(s-r) = \delta_{s,0}\;.
\end{equation}
One has:
\begin{equation}
\label{ginfty0}
g(0) = \exp\left(
-\frac{1}{2\pi}\int_0^{2\pi} \log(1+2\alpha f(\lambda))\,\dd\lambda
\right)\;,
\end{equation}
and
\begin{equation}
\label{sumginfty}
\sum_{s=0}^{+\infty} g(s)
=\exp\left(
-\frac{1}{2}\log(1+2\alpha f(0)) -\frac{1}{4\pi}
\int_0^{2\pi}\log(1+2\alpha f(\lambda))\,\dd\lambda
\right)\;.
\end{equation}
Moreover, if $g_t(s)$ is defined for all $0\leqslant s\leqslant t$
by \emph{(\ref{systgt})}, then for all $s\geqslant 0$,
\begin{equation}
\label{limgt}
\lim_{t\to +\infty} g_t(s)= g(s)\;,
\end{equation}
and
\begin{equation}
\label{limsumgt}
\lim_{t\to +\infty} \sum_{s=0}^t g_t(s)= \sum_{s=0}^{+\infty} g(s)\;.
\end{equation}
\end{prop}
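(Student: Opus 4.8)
The plan is to separate the qualitative claims (existence, uniqueness, and the two convergences) from the explicit evaluation of $g(0)$ and $\sum_s g(s)$, handling the former by a contraction argument on $\ell^1$ and the latter by a Wiener--Hopf factorization of the symbol $\phi(\lambda)=1+2\alpha f(\lambda)$.

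First I would recast the infinite system (\ref{systginfty}) as a fixed-point equation $g=\Phi(g)$ on $\ell^1(\NN)$, where $(\Phi u)(s)=\delta_{s,0}-2\alpha\sum_{r\geqslant 0}u(r)\,k(s-r)$ for $s\geqslant 0$. A direct estimate gives $\|\Phi u-\Phi v\|_{1}\leqslant 2\alpha M\,\|u-v\|_{1}$, so that under the hypothesis $\alpha<1/(2M)$ the map $\Phi$ is a contraction; Banach's theorem then yields existence and uniqueness of $g\in\ell^1(\NN)$, and this is precisely where the bound on $\alpha$ enters. The truncated systems (\ref{systgt}) are fixed points of analogous maps $\Phi_t$ (extending $g_t$ by zero beyond index $t$), which are contractions with the same constant $2\alpha M$. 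Writing $\|g_t-g\|_{1}\leqslant\|\Phi_t g_t-\Phi_t g\|_{1}+\|\Phi_t g-\Phi g\|_{1}$ and absorbing the first term gives $\|g_t-g\|_{1}\leqslant(1-2\alpha M)^{-1}\|\Phi_t g-\Phi g\|_{1}$. Since $\Phi_t g-\Phi g$ equals $-g(s)$ for $s>t$ and $2\alpha\sum_{r>t}g(r)k(s-r)$ for $s\leqslant t$, its $\ell^1$ norm is bounded by $\sum_{s>t}|g(s)|$ plus $2\alpha M\sum_{r>t}|g(r)|$, both of which vanish as $t\to+\infty$ because $g\in\ell^1$. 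Thus $g_t\to g$ in $\ell^1(\NN)$, which delivers at once the pointwise limit (\ref{limgt}) and, since $u\mapsto\sum_s u(s)$ is continuous on $\ell^1$, the limit of sums (\ref{limsumgt}).

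For the explicit formulas I would use the spectral factorization of $\phi$. Since $k\in\ell^1(\ZZ)$, $\phi$ lies in the Wiener algebra; as $f\geqslant 0$ by positive definiteness one has $1\leqslant\phi\leqslant 1+2\alpha M$, so $\phi$ is bounded away from $0$ and by the Wiener--L\'evy theorem $\log\phi$ is again in the Wiener algebra. Writing $a_n=\frac{1}{2\pi}\int_0^{2\pi}\log\phi(\lambda)\ee^{-\ii n\lambda}\,\dd\lambda$ for its Fourier coefficients, set $\phi_\pm(\lambda)=\exp\bigl(\tfrac{a_0}{2}+\sum_{n\geqslant 1}a_n\ee^{\pm\ii n\lambda}\bigr)$, so that $\phi=\phi_-\phi_+$ with $\phi_+^{\pm1}$ supported on nonnegative modes and $\phi_-^{\pm1}$ on nonpositive modes. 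Interpreting (\ref{systginfty}) as the Wiener--Hopf equation $T(\phi)g=e_0$ for the Toeplitz operator with symbol $\phi$ acting on $\ell^1(\NN)$, with right-hand side $e_0=(\delta_{s,0})_{s\geqslant 0}$, the factorization identities give $T(\phi)^{-1}=T(\phi_+^{-1})T(\phi_-^{-1})$. Applying this to $e_0$ and using that $T(\phi_-^{-1})e_0=\ee^{-a_0/2}e_0$ (only the zeroth mode of $\phi_-^{-1}$ contributes) yields the generating function $\sum_{s\geqslant 0}g(s)\ee^{\ii\lambda s}=\ee^{-a_0/2}\phi_+(\lambda)^{-1}$. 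Reading off the constant term gives $g(0)=\ee^{-a_0}$, which is (\ref{ginfty0}); evaluating at $\lambda=0$ and using $\phi_+(0)=(1+2\alpha f(0))^{1/2}$, itself a consequence of $\sum_{n\geqslant1}a_n=\tfrac12(\log\phi(0)-a_0)$, gives $\sum_{s\geqslant0}g(s)=\ee^{-a_0/2}(1+2\alpha f(0))^{-1/2}$, which is (\ref{sumginfty}).

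The main obstacle is the convergence of sums (\ref{limsumgt}): pointwise convergence $g_t(s)\to g(s)$ alone does not control the tails of the sequences uniformly in $t$, so it cannot by itself justify exchanging limit and summation. The contraction setup is tailored to overcome this, since it produces convergence directly in $\ell^1$ rather than merely coordinatewise. A secondary technical point is the justification of the Wiener--Hopf solution formula, which rests on the factorization identities for Toeplitz operators together with the membership of $\log\phi$, $\phi_+$, and $\phi_+^{-1}$ in the Wiener algebra; this guarantees that all series involved converge absolutely and that the $g$ so obtained coincides with the $\ell^1$ solution found by the fixed-point argument.
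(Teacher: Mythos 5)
Your proof is correct, and although both arguments are Wiener--Hopf in spirit (the paper itself credits Krein's method), the execution differs at both stages. For the convergence claims, the paper obtains (\ref{limgt}) by citing finite-section convergence of truncated Toeplitz inverses from B\"ottcher--Silbermann, and proves (\ref{limsumgt}) separately via the absorption estimate $\sum_{s=0}^t|g(s)-g_t(s)|\leqslant \frac{2\alpha M}{1-2\alpha M}\sum_{r>t}|g(r)|$; your Banach fixed-point packaging produces essentially the same inequality but upgrades it to convergence of $g_t$ to $g$ in $\ell^1$, which delivers (\ref{limgt}) and (\ref{limsumgt}) simultaneously and is self-contained. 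For the explicit formulas, the paper takes Fourier transforms of (\ref{systginfty}) to get $F^+(\lambda)(1+2\alpha f(\lambda))=1-F^-(\lambda)$ and applies Cauchy's formula to $\log \bar F^+$, evaluating at $\zeta_0=0$ and then at $\zeta_0\to 1$; that second evaluation needs a limit onto the unit circle, handled by the observation that $\mathrm{Re}(1/(1-\ee^{-\ii\lambda}))=1/2$ together with an odd-function cancellation. You instead build the canonical factorization $\phi=\phi_-\phi_+$ from the Fourier coefficients of $\log\phi$ and invoke the operator identity $T(\phi)^{-1}=T(\phi_+^{-1})T(\phi_-^{-1})$ for the Toeplitz operator $T(\phi)$, which is the paper's $I+2\alpha H$ --- this identity is precisely Krein's theorem (Theorem 1.14 of B\"ottcher--Silbermann) that the paper points to as its inspiration --- obtaining the full generating function $\sum_{s\geqslant0}g(s)\ee^{\ii\lambda s}=\ee^{-a_0/2}\phi_+(\lambda)^{-1}$. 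The two routes prove the same core identity: the paper's (\ref{logF}) is your formula in disguise, since the Cauchy integral there is the analytic projection of $\log\phi$. What each buys: the paper's contour-integral route needs only elementary complex analysis once the transform relation is set up, whereas yours avoids the delicate boundary limit $\zeta_0\to1$ entirely and yields every coefficient of $g$, not just $g(0)$ and $\sum_s g(s)$, at the price of relying on Banach-algebra facts (Wiener--L\'evy, stability of the analytic Wiener subalgebras under $\exp$, validity of the Toeplitz identities on $\ell^1$). One small point worth making explicit in your write-up: your definition of $\phi_\pm$ and the identity $\sum_{n\geqslant1}a_n=\frac{1}{2}(\log\phi(0)-a_0)$ silently use $a_n=a_{-n}$, i.e.\ the symmetry of $k$, which is indeed among the hypotheses.
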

The main idea of the proof amounts to writing the Wiener-Hopf
factorization of the operator $I+2\alpha H$. The method is originally 
due to M. G. Krein: 
see section 1.5 of 
\cite{BottcherSilbermann99}, in particular the proof of
Theorem 1.14 p.~17. 
%
\begin{proof}
Conditions of invertibility for Toeplitz operators are well
known. They are treated in section 2.3 and 7.2 of
\cite{BottcherSilbermann99}. Here, the $L^1$ norm of the Toeplitz
operator $H$ with
symbol $k$ is $M$, and the condition $0\leqslant \alpha<1/(2M)$
permits to write the inverse as:
$$
(I+2\alpha H)^{-1} = \sum_{n=0}^{+\infty} (-2\alpha H)^n\;.
$$ 
That the truncated inverse 
$(I_t+2\alpha H_t)^{-1}$ converges to $(I+2\alpha H)^{-1}$ 
is deduced for the $L^2$ case from
\cite[p.~42]{BottcherSilbermann99}. Convergence of entries follows,
hence (\ref{limgt}). To obtain (\ref{limsumgt}), consider 
$\delta_t(s)=g(s)-g_t(s)$. From (\ref{systgt}) and (\ref{systginfty}):
$$
\delta_t(s) = -2\alpha \sum_{r=0}^t k(r-s)\delta_t(r)
-2\alpha \sum_{r=t+1}^{+\infty} g(r) k(r-s)\;.
$$
Hence:
$$
\sum_{s=0}^t | \delta_t(s) |\leqslant
2\alpha \left(\sum_{r=0}^t |\delta_t(r)|\right)
\left(\sum_{s=-\infty}^{+\infty} |k(s)|\right) +2\alpha 
\left(\sum_{s=-\infty}^{+\infty} |k(s)|\right)\left(\sum_{r=t+1}^{+\infty}
  |g(r)|\right)\;.
$$
Thus the following bound is obtained:
$$
\sum_{s=0}^t | \delta_t(s) |\leqslant \frac{2\alpha M}{1-2\alpha M}
\sum_{r=t+1}^{+\infty} |g(r)|\;,
$$
which yields (\ref{limsumgt}). 

The generating function of $(g(s))_{s\geqslant 0}$
will now be related to the spectral density $f$.
Define for all $s\in\ZZ$,
$$
g^+(s) = 
\left\{\begin{array}{ll}g(s)&\mbox{if }s\geqslant 0\\
0&\mbox{else}\end{array}\right.
\quad\mbox{and}\quad
g^-(s) = 
\left\{\begin{array}{ll}g(s)&\mbox{if }s< 0\\
0&\mbox{else.}\end{array}\right.
$$ 
Denote by $F^+$ and $F^-$ the Fourier transforms of $g^+$
and $g^-$:
$$
F^\pm(\lambda) = \sum_{s\in\ZZ} \ee^{\ii s\lambda} g^\pm(s)\;.
$$ 
Take Fourier transforms in both members of (\ref{systginfty}):
$$
F^+(\lambda)+F^-(\lambda)+2\alpha
F^+(\lambda) f(\lambda)  = 1\;.
$$
Or else:
\begin{equation}
\label{eqfourier}
F^+(\lambda)(1+2\alpha f(\lambda))
=1-F^-(\lambda)\;.
\end{equation}
The functions $F^\pm(\lambda)$ can be seen as being defined on the unit
circle. They can be extended into analytic functions, one inside the
unit disk, the other one outside. For $|\zeta|\leqslant 1$:
$$
\bar{F}^+(\zeta) = \sum_{s\geqslant 0} \zeta^s g^+(s)\;,
$$
and for $|\zeta|\geqslant1$:
$$
\bar{F}^-(\zeta) = \sum_{s< 0} \zeta^s g^-(s)\;.
$$
Similarly, we shall denote by $\bar{f}(\zeta)$ the value of
$f(\lambda)$ for $\zeta=\ee^{\ii\lambda}$.
Let $\zeta_0$ be any fixed complex inside the unit disk. In
(\ref{eqfourier}), take logarithms of both members (principal branch), divide by
$2\pi\ii(\zeta-\zeta_0)$, then take the contour integral over the unit
circle.
\begin{equation}
\label{contourfourier}
\begin{array}{cl}
&\displaystyle{\frac{1}{2\pi\ii}\oint_{|\zeta|=1}
\frac{\log(\bar{F}^+(\zeta))}{\zeta-\zeta_0}\,\dd \zeta
+
\frac{1}{2\pi\ii}\oint_{|\zeta|=1}
\frac{\log(1+2\alpha
  \bar{f}(\zeta))}{\zeta-\zeta_0}
\,\dd \zeta}\\[2ex]
=&\displaystyle{
\frac{1}{2\pi\ii}\oint_{|\zeta|=1}
\frac{\log(1-\bar{F}^-(\zeta))}{\zeta-\zeta_0}\,\dd \zeta\;.}
\end{array}
\end{equation}
Since $\bar{F}^+$ is analytic inside the unit disk, the residue of the
first integral is $\bar{F}^+(\zeta_0)$. Let us prove that the integral
in the second member is null. Since the function to be integrated 
is analytic outside the unit circle, the integral has the
same value over any circle with radius $\rho>1$, centered at $0$. 
$$
\frac{1}{2\pi\ii}\oint_{|\zeta|=1}
\frac{\log(1-\bar{F}^-(\zeta))}{\zeta-\zeta_0}\,\dd \zeta
=
\frac{1}{2\pi\ii}\oint_{|\zeta|=\rho>1}
\frac{\log(1-\bar{F}^-(\zeta))}{\zeta-\zeta_0}\,\dd \zeta\;.
$$
As $\rho$ tends to $+\infty$, it can easily be checked that the right
hand side tends to zero.
Thus (\ref{contourfourier}) becomes:
\begin{equation}
\label{logF}
\log(\bar{F}^+(\zeta_0))
=
-\frac{1}{2\pi\ii}\oint_{|\zeta|=1}
\frac{\log(1+2\alpha
  \bar{f}(\zeta))}{\zeta-\zeta_0}
\,\dd \zeta\;.
\end{equation}
Two particular cases are of interest. Consider first $\zeta_0=0$.
\begin{eqnarray*}
\log(\bar{F}^+(0))
&=&
-\frac{1}{2\pi\ii}\oint_{|\zeta|=1}
\frac{\log(1+2\alpha
  \bar{f}(\zeta))}{\zeta}
\,\dd \zeta\\
&=&
-\frac{1}{2\pi}\int_0^{2\pi} \log(1+2\alpha f(\lambda))\,\dd\lambda\;.
\end{eqnarray*}
Since $\bar{F}^+(0)=g(0)$,  (\ref{ginfty0}) follows.

The other particular case is $\zeta_0=1$, but it is on the unit
circle; so a limit has to be taken.
\begin{eqnarray*}
\log(\bar{F}^+(1))
&=&
\lim_{\substack{\zeta_0\to 1\\|\zeta_0|<1}}
\left(-\frac{1}{2\pi\ii}\oint_{|\zeta|=1}
\frac{\log(1+2\alpha
  \bar{f}(1))}{\zeta-\zeta_0}
\,\dd \zeta\right.
\\
&&\hspace*{2cm}\left.
-\frac{1}{2\pi\ii}\oint_{|\zeta|=1}
\frac{\log(1+2\alpha
  \bar{f}(\zeta))-\log(1+2\alpha \bar{f}(1))}{\zeta-\zeta_0}
\,\dd \zeta\right)
\;.
\end{eqnarray*}
The first integral does not depend on $\zeta_0$:
$$
-\frac{1}{2\pi\ii}\oint_{|\zeta|=1}
\frac{\log(1+2\alpha
  \bar{f}(1))}{\zeta-\zeta_0}
\,\dd \zeta
=
-\log(1+2\alpha f(0))\;. 
$$
Since $\bar{f}$ has no singularity at $1$, the limit of the second
integral is:
$$
-\frac{1}{2\pi\ii}\oint_{|\zeta|=1}
\frac{\log(1+2\alpha
  \bar{f}(\zeta))-\log(1+2\alpha \bar{f}(1))}{\zeta-1}
\,\dd \zeta
\;.
$$
Written as a real integral:
$$
-\frac{1}{2\pi}\int_{0}^{2\pi}
\frac{\log(1+2\alpha
  f(\lambda))-\log(1+2\alpha f(0))}{1-\ee^{-\ii\lambda}}
\,\dd \lambda
\;.
$$
For all $\lambda$, the real part of $1/(1-\ee^{-\ii\lambda})$ is
$1/2$. The integral of the imaginary part vanishes, because the
function to be integrated is odd. Finally:
$$
\log(\bar{F}^+(1))=
-\frac{1}{2}\log(1+2\alpha f(0)) -\frac{1}{4\pi}
\int_0^{2\pi}\log(1+2\alpha f(\lambda))\,\dd\lambda\;,
$$
hence (\ref{sumginfty}), since 
$$
\bar{F}^+(1)=F^+(0)=\sum_{s=0}^{+\infty} g(s)\;.
$$
\end{proof}
Here is the probabilistic interpretation. Consider a centered
stationary process $(Y_t)_{t\in\ZZ}$, with covariance function
$A(t,s)=a(t-s)$. For $s\leqslant t$, denote by
$\mathcal{Y}_{\llbracket s,t \rrbracket }$ the
$\sigma$-algebra generated by $(Y_r)_{r=s,\ldots,t}$. Consider again
the partial innovation
$\nu_t=Y_t-\EE[Y_t\,|\,\mathcal{Y}_{\llbracket 0,t-1 \rrbracket}]$. From (\ref{inno}), and
using stationarity, $\nu_t$ has the same distribution as
$$
\eta_t = \frac{1}{G(t,t)} \sum_{r=0}^{t} G(t,t-r)\, Y_{-r}\;,
$$
which is:
$$
\eta_t = Y_0-\EE[Y_0\,|\, \mathcal{Y}_{\llbracket -t,-1 \rrbracket}]\;.
$$
As $t$ tends to infinity, $\eta_t$ converges almost surely to:
$$
\eta_{\infty} = Y_0-\EE[Y_0\,|\,\mathcal{Y}_{\llbracket -\infty,-1 \rrbracket}]\;.
$$
Observe by stationarity that for all $r$,
$$
\eta_{\infty} \stackrel{\mathcal{D}}{=}
Y_r - \EE[Y_r\,|\, \mathcal{Y}_{\llbracket -\infty,r-1 \rrbracket }]\;,
$$
which is the innovation process associated to $Y$. Now the variance of
$\nu_t$, $1/G(t,t)$ tends to the variance of $\eta_\infty$. From the
Szeg\H{o}-Kolmogorov formula (see e.g. 
Theorem 3 p.~137 of \cite{Hannan70}), that variance is:
$$
\exp\left(\frac{1}{2\pi} \int_0^{2\pi} \log(\phi(\lambda))\,\dd
  \lambda \right)\;,
$$
where $\phi(\lambda)$ is the spectral density of $Y$. Let $X$ be a
centered stationary process with covariance function $k$, $\varepsilon$
be a standard Gaussian noise, and $Y=\varepsilon+\sqrt{2\alpha} X$. The spectral
densities $\phi$ of $Y$ and $f$ of $X$ are related by
$\phi(\lambda)=1+2\alpha f(\lambda)$. Hence:
$$
\lim_{t\to+\infty} \mathrm{var}(\nu_t)
=
\lim_{t\to+\infty} \frac{1}{G(t,t)}
=
\exp\left(\frac{1}{2\pi} \int_0^{2\pi} \log(1+2\alpha f(\lambda))\,\dd
  \lambda \right)\;,
$$
which is equivalent to (\ref{ginfty0}).

Alternatively, observe that due to stationarity, $\mu_t$ defined by
(\ref{mut}) has the same distribution as:
$$
\xi_t = \sum_{r=0}^t G(t,t-r)\,Y_{-r}\;,
$$
which is
$$
\xi_t = \EE[ \varepsilon_0\,|\,\mathcal{Y}_{\llbracket -t,0  \rrbracket }]\;.
$$
As $t$ tends to infinity, $\xi_t$ converges a.s. to:
$$
\xi_\infty = \EE[ \varepsilon_0\,|\,\mathcal{Y}_{\llbracket -\infty,0  \rrbracket }]\;.
$$
Of course, since $\EE[\varepsilon_{-s} Y_{-r}] = \delta_{s,r}$, 
for all $s=0,\ldots,t$:
$$
\EE[\xi_t\,\varepsilon_{-s}] = G(t,t-s)\;.
$$
Hence the limiting property (\ref{limgt}) says that: 
$$
\EE[\xi_\infty\,\varepsilon_{-s}]
= \lim_{t\to+\infty} G(t,t-s)=g(s)\;. 
$$
Actually, $\xi_\infty$ admits the representation:
$$
\xi_\infty = \sum_{s=0}^{+\infty} g(s)\, Y_{-s}\;.
$$
Similarly, for all $t$,
$$
\EE[\varepsilon_t\,|\,\mathcal{Y}_{\llbracket -\infty,t  \rrbracket }]
= \sum_{s=0}^{+\infty} g(s)\,Y_{t-s}\;,
$$
which means that $(g(s))$ realizes the optimal causal Wiener filter of
$\varepsilon_t$ from the $Y_{t-s}$'s.
\vskip 2mm\noindent
Now, Proposition \ref{stationarycase} is a straightforward consequence of
Proposition \ref{prop:asymptgt}.
\begin{proof}
Let the coefficients $g_\tau(s)$ be defined by
 (\ref{systgt}). Applying (\ref{detA}) to $A=I_t+2\alpha H_t$, one
 gets:
$$
\left(\mathrm{det}(I_t+2\alpha H_t)\right)^{-1/2} = 
\left(\prod_{\tau=0}^{t-1}g_\tau(0)\right)^{1/2}\;.
$$
Therefore:
$$
\frac{1}{t}\log\left(\mathrm{det}(I_t+2\alpha H_t))^{-1/2}\right)
=\frac{1}{2t} \sum_{\tau=0}^{t-1} \log(g_\tau(0))\;.
$$
From Proposition \ref{prop:asymptgt}:
$$
\lim_{\tau\to +\infty} g_\tau(0) = g(0)=
\exp\left(
-\frac{1}{2\pi}\int_0^{2\pi} \log(1+2\alpha f(\lambda))\,\dd\lambda
\right)\;.
$$
Hence:
$$
\lim_{t\to +\infty}
\frac{1}{t}\log\left(\mathrm{det}(I_t+2\alpha H_t))^{-1/2}\right)
=-\ell_0(\alpha)\;.
$$
Applying now (\ref{fqA}) to $A=I_t+2\alpha H_t$, one gets:
$$
c_t^*G_t^* D_t^{-1} G_tc_t
=m_\infty^2\sum_{\tau=0}^{t-1}
\frac{1}{g_\tau(0)} \left(\sum_{s=0}^\tau g_\tau(s)\right)^2\;.
$$
From Proposition \ref{prop:asymptgt}:
\begin{eqnarray*}
\lim_{\tau\to +\infty} 
\frac{1}{g_\tau(0)} \left(\sum_{s=0}^\tau g_\tau(s)\right)^2
&=&
 \frac{1}{g(0)}\left(\sum_{s=0}^{+\infty} g(s)\right)^2\\
&=&
(1+2\alpha f(0))^{-1}\;.
\end{eqnarray*}
Hence:
$$
\lim_{t\to +\infty} \frac{\alpha}{t} c_t^*(I_t+2\alpha H_t)^{-1}c_t=\ell_1(\alpha)\;.
$$
\end{proof}
\section{Asymptotic equivalence}
\label{Szego}
Proposition \ref{stationarycase} only treats the stationary case. To
extend the result under the hypotheses of Theorem \ref{th:main}, a
notion of asymptotic equivalence of matrices and vectors is
needed. It is developed in this section. 

From (\ref{explicitLxt}), we must prove that under the
 hypotheses of Theorem \ref{th:main}:
\begin{equation}
\label{liml0t}
\lim_{t\to +\infty} \frac{1}{2t}\log(\mathrm{det}(I_t+2\alpha K_t)) 
= \ell_0(\alpha)= 
\frac{1}{4\pi} \int_{0}^{2\pi} \log(1+2\alpha
f(\lambda))\,\dd\lambda\;,
\end{equation}
and
\begin{equation}
\label{liml1t}
\lim_{t\to +\infty} \frac{\alpha}{t} 
m^*_{t}(I_t+2\alpha K_t)^{-1}m_{t} = 
\ell_1(\alpha)
=
m_\infty^2\alpha(1+2\alpha f(0))^{-1}\;.
\end{equation}
If $K_t=H_t$ (centered stationary case), (\ref{liml0t}) is
(\ref{liml0tstat}). It can also be obtained by a
straightforward application of 
Szeg\H{o}'s theorem: see 
\cite{BottcherSilbermann99,Bingham12}. That (\ref{liml0t}) holds
(centered asymptotically stationary case) 
is a consequence of the theory of asymptotically
Toeplitz matrices: see
section 7.4 p.~104 of
\cite{GrenanderSzego84}, and also
\cite[Theorem 4 p. 178]{Gray06}. Asymptotic equivalence
of matrices in Szeg\H{o}'s theory is taken in the $L^2$ sense, which
is weaker than the one considered here. In other words,
(\ref{liml0t}) holds under weaker hypotheses than (H1-5). In order to
prove (\ref{liml1t}), we shall develop 
asymptotic equivalence of
matrices and vectors along the same lines as
\cite[section 2.3]{Gray06}, but in a stronger sense,
replacing $L^2$ by $L^\infty$ and 
$L^1$, for boundedness and
convergence. The norms used here for a vector $v=(v(s))_{s=0,\ldots,t-1}$ are:
$$
\|v\|_\infty = \max_{s=0}^{t-1} |v(s)|
\quad\mbox{and}\quad
\|v\|_1 = \sum_{s=0}^{t-1} |v(s)|\;.
$$
For symmetric matrices, the norm subordinate to
$\|\,\cdot\,\|_\infty$ is equal to the norm subordinate to 
$\|\,\cdot\,\|_1$. It will be denoted by $\|\,\cdot\,\|$ and referred
to as \emph{strong norm}. For
$A=(A(s,r))_{s,r=0,\ldots,t-1}$ such that $A^*=A$,
\begin{eqnarray*}
\|A\|&=&
\max_{s=0}^{t-1}\sum_{r=0}^{t-1}|A(s,r)|
=\max_{\|v\|_\infty=1} \|Av\|_\infty\\
&=&
\max_{r=0}^{t-1}\sum_{s=0}^{t-1}|A(s,r)| 
=\max_{\|v\|_1=1} \|Av\|_1 \;.
\end{eqnarray*}
The following \emph{weak norm} will be denoted by $|A|$:
$$
|A| = \frac{1}{t} \sum_{s,r=0}^{t-1} |A(s,r)|\;.
$$
Clearly, $|A|\leqslant \|A\|$. Moreover, the following bounds hold.
\begin{lemm}
\label{lem:strongweak}
Let $A$ and $B$ be two symmetric matrices. Then:
$$
|AB|\leqslant \|A\|\,|B|
\quad\mbox{and}\quad
|AB|\leqslant |A|\,\|B\|\;.
$$
\end{lemm}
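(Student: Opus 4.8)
The plan is to reduce everything to an entrywise estimate followed by a single reindexing of a triple sum. Writing the $(s,r)$ entry of the product as $(AB)(s,r)=\sum_k A(s,k)B(k,r)$ and applying the triangle inequality inside the weak norm, I would first obtain
$$
|AB|=\frac1t\sum_{s,r}\Bigl|\sum_k A(s,k)B(k,r)\Bigr|
\leqslant\frac1t\sum_{s,r,k}|A(s,k)|\,|B(k,r)|\;.
$$
Both announced bounds will be extracted from this common majorant, the difference being only which index is summed first.

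For the inequality $|AB|\leqslant\|A\|\,|B|$, I would carry out the sum over $s$ on the $A$ factor, rewriting the majorant as
$$
\frac1t\sum_{k,r}|B(k,r)|\Bigl(\sum_s|A(s,k)|\Bigr)\;.
$$
The inner column sum satisfies $\sum_s|A(s,k)|\leqslant\max_k\sum_s|A(s,k)|=\|A\|$; pulling out this bound and recognising the remaining factor as $|B|$ leaves precisely $\|A\|\,|B|$.

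For the inequality $|AB|\leqslant|A|\,\|B\|$, I would symmetrically carry out the sum over $r$ on the $B$ factor, writing the same majorant as
$$
\frac1t\sum_{s,k}|A(s,k)|\Bigl(\sum_r|B(k,r)|\Bigr)\;,
$$
and bound the inner row sum by $\sum_r|B(k,r)|\leqslant\max_k\sum_r|B(k,r)|=\|B\|$, which yields $|A|\,\|B\|$ after factoring.

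There is no genuine obstacle here: the computation is a one-step regrouping of a finite sum, so the work is purely in the bookkeeping. The only point deserving attention is that the strong norm enters once as a maximal column sum (for $A$) and once as a maximal row sum (for $B$); it is exactly the symmetry hypothesis $A^*=A$, $B^*=B$ that makes these two quantities coincide with the single norm $\|\,\cdot\,\|$ defined above, so that both estimates can be stated in terms of the same strong norm.
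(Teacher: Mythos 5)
Your proof is correct and is essentially the paper's own argument written out entrywise: the paper bounds $\|Ab\|_1\leqslant\|A\|\,\|b\|_1$ for each column $b$ of $B$ (and rows for the second inequality), which is exactly your triple-sum regrouping with the column sums of $A$, respectively row sums of $B$, bounded by the strong norm. Your closing remark correctly identifies the role of symmetry, namely that it makes the maximal row and column sums coincide so a single norm $\|\,\cdot\,\|$ serves in both estimates.
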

\begin{proof}
$|AB|$ is the arithmetic mean of the $L^1$ norms of column vectors of
$AB$. If $b$ is any column vector of $B$,
$$
\|Ab\|_1 \leqslant \|A\|\,\|b\|_1\;,
$$ 
because the strong norm is subordinate to the $L^1$ norm of
vectors. Hence the first result. For the second result, replace
columns by rows.
\end{proof}
Here is a definition of asymptotic equivalence for vectors.
\begin{defi}
Let $(v_t)_{t\geqslant 0}$ and $(w_t)_{t\geqslant 0}$ 
be two sequences of vectors
such that for all $t\geqslant 0$, $v_t=(v_t(s))_{s=0,\ldots,t-1}$
and $w_t=(w_t(s))_{s=0,\ldots,t-1}$. They are said to be
\emph{asymptotically equivalent} if:
\begin{enumerate}
\item
$\|v_t\|_\infty$ and $\|w_t\|_\infty$ are uniformly bounded,
\item
$\displaystyle{\lim_{t\to +\infty} \frac{1}{t} \|v_t-w_t\|_1 = 0.}$
\end{enumerate} 
Asymptotic equivalence of $(v_t)$ and $(w_t)$ will be denoted by $v_t\sim w_t$.
\end{defi}
\noindent
Hypotheses (H1) and (H4) imply
that $m_t\sim c_t$.
\\\noindent
Asymptotic equivalence for matrices is defined as follows (compare with 
\cite[p.~172]{Gray06}).
\begin{defi}
Let $(A_t)_{t\geqslant 0}$ and $(B_t)_{t\geqslant 0}$ be two sequences
of symmetric matrices, where for all $t\geqslant 0$, 
$A_t=(A_t(s,r))_{s,t=0,\ldots,t-1}$ and
$B_t=(B_t(s,r))_{s,t=0,\ldots,t-1}$. They are said to be
\emph{asymptotically equivalent} if:
\begin{enumerate}
\item
$\|A_t\|$ and $\|B_t\|$ are uniformly bounded,
\item
$\displaystyle{\lim_{t\to +\infty} |A_t-B_t| = 0}$.
\end{enumerate} 
Asymptotic equivalence of $(A_t)$ and $(B_t)$ 
will still be denoted by $A_t\sim B_t$.
\end{defi}
Here are some elementary results, analogous to those stated in 
Theorem 1 p. 172 of \cite{Gray06}.
\begin{lemm}
\label{lem:AEmat}
Let $(A_t), (B_t), (C_t), (D_t)$ be four sequences of symmetric matrices.
\begin{enumerate}
\item
If $A_t\sim B_t$ and $B_t\sim C_t$ then $A_t \sim C_t$.
\item
If $A_t\sim B_t$ and $C_t\sim D_t$ then
$A_t+C_t\sim B_t+D_t$.
\item
If $A_t\sim B_t$ and $C_t\sim D_t$ then
$A_tC_t\sim B_tD_t$.
\item
If $A_t\sim B_t$ and $F$ is an analytic function with radius
$R>\max\|A_t\|, \max\|B_t\|$, then $F(A_t)\sim F(B_t)$.
\end{enumerate}  
\end{lemm}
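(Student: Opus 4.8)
The plan is to prove the four assertions in order, since each relies on the structure established by the preceding ones and on Lemma \ref{lem:strongweak}. For part (1), transitivity, I would use the triangle inequality for the weak norm, writing $|A_t-C_t|\leqslant |A_t-B_t|+|B_t-C_t|$, and observe that both terms tend to $0$ by hypothesis; uniform boundedness of $\|A_t\|$ and $\|C_t\|$ is immediate from the two hypotheses. Part (2), additivity, is equally direct: $|A_t+C_t-(B_t+D_t)|\leqslant|A_t-B_t|+|C_t-D_t|\to 0$, and the strong norm is subadditive so $\|A_t+C_t\|\leqslant\|A_t\|+\|C_t\|$ stays uniformly bounded.

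The substance of the lemma is in part (3), which is where I expect the main work. The standard telescoping trick is to write
\[
A_tC_t-B_tD_t = (A_t-B_t)C_t + B_t(C_t-D_t)\;,
\]
and then to bound each piece using Lemma \ref{lem:strongweak}. The first term gives $|(A_t-B_t)C_t|\leqslant |A_t-B_t|\,\|C_t\|$, and the second gives $|B_t(C_t-D_t)|\leqslant \|B_t\|\,|C_t-D_t|$. Since $\|C_t\|$ and $\|B_t\|$ are uniformly bounded while $|A_t-B_t|$ and $|C_t-D_t|$ tend to $0$, the weak norm of the difference tends to $0$. The one subtlety to watch is that Lemma \ref{lem:strongweak} is stated for symmetric matrices, whereas the products $A_tC_t$ and $B_tD_t$ need not be symmetric even when all four factors are. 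The weak norm $|\cdot|$ and the bounds of Lemma \ref{lem:strongweak}, however, only use the $L^1$-norms of column (or row) vectors and the subordinate strong norm, so I would check that the relevant inequalities $\|Ab\|_1\leqslant\|A\|\,\|b\|_1$ carry over to arbitrary products without needing symmetry of the product itself. Uniform boundedness of $\|A_tC_t\|$ follows from submultiplicativity of the subordinate strong norm, $\|A_tC_t\|\leqslant\|A_t\|\,\|C_t\|$.

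Part (4) is the only place requiring a genuinely new idea, and it is the step I expect to be the real obstacle. The natural approach is to approximate $F$ by its partial Taylor sums. Writing $F(z)=\sum_{n\geqslant 0} c_n z^n$ with radius of convergence $R$ exceeding $\sup_t\|A_t\|$ and $\sup_t\|B_t\|$, I would fix a radius $\rho$ with $\max(\sup\|A_t\|,\sup\|B_t\|)<\rho<R$, so that the tail $\sum_{n>N}|c_n|\rho^n$ is uniformly small. For the polynomial part, repeated application of parts (2) and (3) gives $P_N(A_t)\sim P_N(B_t)$ for each fixed $N$, where $P_N$ is the $N$-th partial sum. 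The difficulty is to control the tail uniformly in $t$ in the \emph{weak} norm: I would bound $|F(A_t)-P_N(A_t)|\leqslant\|F(A_t)-P_N(A_t)\|\leqslant\sum_{n>N}|c_n|\,\|A_t\|^n\leqslant\sum_{n>N}|c_n|\rho^n$, using $|\cdot|\leqslant\|\cdot\|$ and submultiplicativity, and similarly for $B_t$. A standard $\eps/3$ argument then combines the uniformly small tails with the convergence of the fixed-degree polynomial part to conclude $|F(A_t)-F(B_t)|\to 0$; uniform boundedness of $\|F(A_t)\|$ and $\|F(B_t)\|$ follows from the same tail bound. The subtlety here is interchanging the limit in $t$ with the limit in $N$, which the uniform tail estimate justifies.
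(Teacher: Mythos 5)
Your proposal is correct and follows essentially the same route as the paper: triangle inequality for points 1 and 2, the telescoping decomposition $(A_t-B_t)C_t + B_t(C_t-D_t)$ bounded via Lemma \ref{lem:strongweak} for point 3, and polynomial truncation with a uniform-in-$t$ tail bound plus an $\epsilon/3$ argument for point 4. Your two extra precautions --- checking that Lemma \ref{lem:strongweak} applies even though the products $A_tC_t$, $B_tD_t$ need not be symmetric, and introducing an intermediate radius $\rho<R$ to make the tail estimate $\sum_{n>N}|c_n|\rho^n$ explicit --- are sound refinements of steps the paper passes over quickly, not a different method.
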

\begin{proof}
Points 1 and 2 follow from the triangle inequality for the weak
norm. For point 3, because $\|\,\cdot\,\|$ is a norm of matrices,
$\|A_tC_t\|\leqslant \|A_t\|\,\|C_t\|$ and $\|B_tD_t\|\leqslant
\|B_t\|\,\|D_t\|$ are uniformly bounded. Moreover by Lemma
\ref{lem:strongweak},
\begin{eqnarray*}
|A_tC_t - B_tD_t| &\leqslant& |(A_t-B_t)C_t|+|B_t(C_t-D_t)|\\
&\leqslant& |A_t-B_t|\,\|C_t\|+\|B_t\|\,|C_t-D_t|\;.
\end{eqnarray*}
Since $\|C_t\|$ and $\|B_t\|$ are uniformly bounded, and
$$
\lim_{t\to \infty} |A_t-B_t| =
\lim_{t\to \infty} |C_t-D_t|=0\;,
$$
the result follows. For point 4,  
let $F$ be analytic with radius of convergence $R$.
For
$|z|<R$, let
$$
F(z)=\sum_{k=0}^{+\infty} a_k\,z^k\;,
$$  
and
$$
F_n(z)=\sum_{k=0}^{n} a_k\,z^k\;.
$$
The matrices $F(A_t)$, $F(B_t)$ are defined as the limits of
$F_n(A_t)$, $F_n(B_t)$; from the hypothesis, it follows that the
convergence is uniform in $t$. Because $\|\,\cdot \,\|$ is a matrix norm, 
$
\|F(A_t)\| \leqslant F(\|A_t\|)
$
and the same holds for $B_t$: $\|F(A_t)\|$ and $\|F(B_t)\|$ are
uniformly bounded. Let $\epsilon$ be a positive real. Fix $n$ such
that for all $t$,
$$ 
\|F(A_t)-F_n(A_t)\|<\frac{\epsilon}{3}
\quad\mbox{and}\quad
\|F(B_t)-F_n(B_t)\|<\frac{\epsilon}{3}\;.
$$
By induction on $n$ using points 2 and 3, $F_n(A_t)\sim
F_n(B_t)$. There exists $t_0$ such that for all $t>t_0$, 
$$
|F_n(A_t)-F_n(B_t)|<\frac{\epsilon}{3}\;.
$$
Thus for all $t>t_0$,
\begin{eqnarray*}
|F(A_t)-F(B_t)|&\leqslant&
|F(A_t)-F_n(A_t)|+|F_n(A_t)-F_n(B_t)|+|F_n(B_t)-F(B_t)|\\
&\leqslant&
\|F(A_t)-F_n(A_t)\|+|F_n(A_t)-F_n(B_t)|+\|F_n(B_t)-F(B_t)\|\\
&<&\epsilon\;.
\end{eqnarray*}
Hence the result.
\end{proof}
Hypothesis (H3) implies that
$\|H_t\|$ is uniformly bounded, (H2) and (H5) that $K_t\sim
H_t$. Point 4 will be
applied to $F(z)=(1 +2\alpha z)^{-1}$, which has radius of convergence
$R=1/2\alpha$. Let $M$ be defined as:
$$
M = \max\left\{\max_{t\geqslant 1}\|K_t\|\,,\; \sum_{t\in\ZZ} |k(t)|\right\}\;.
$$
For all $\alpha<\alpha_0=1/(2M)$,
\begin{equation}
\label{equivKH}
(I_t+2\alpha K_t)^{-1}
\,\sim\,
(I_t+2\alpha H_t)^{-1}\;.
\end{equation}

Here is the relation between asymptotic equivalence of vectors
and matrices.
\begin{lemm}
\label{lem:AEmatvec}~
\begin{enumerate}
\item
If $A_t\sim B_t$ and $\|v_t\|_\infty$ is uniformly bounded, then
$A_tv_t\sim B_tv_t$.
\item
If $v_t\sim w_t$ and $\|A_t\|$ is uniformly bounded, then
$A_tv_t\sim A_tw_t$.
\end{enumerate}
\end{lemm}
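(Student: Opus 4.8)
The plan is to verify, for each of the two assertions, the two defining conditions of asymptotic equivalence of vectors: uniform boundedness in the $\|\cdot\|_\infty$ norm, and the vanishing of the normalized $L^1$ distance. Both assertions rest on two facts about the strong norm recorded before the lemma: that it is subordinate to both the $\|\cdot\|_\infty$ and the $\|\cdot\|_1$ norms of vectors, together with the elementary inequality relating the $L^1$ norm of a matrix--vector product to the weak norm, namely $\|Mv\|_1\le t\,|M|\,\|v\|_\infty$.

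For the first assertion, boundedness follows at once from $\|A_tv_t\|_\infty\le\|A_t\|\,\|v_t\|_\infty$ and the analogous bound for $B_t$, since $\|A_t\|$, $\|B_t\|$ and $\|v_t\|_\infty$ are all uniformly bounded. For the convergence I would write $A_tv_t-B_tv_t=(A_t-B_t)v_t$ and estimate
$$
\frac{1}{t}\|(A_t-B_t)v_t\|_1
\le \|v_t\|_\infty\,|A_t-B_t|\;,
$$
which tends to $0$ because $\|v_t\|_\infty$ is bounded and $|A_t-B_t|\to 0$ by the matrix equivalence $A_t\sim B_t$.

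For the second assertion, boundedness is again immediate from $\|A_tv_t\|_\infty\le\|A_t\|\,\|v_t\|_\infty$ and its analogue for $w_t$. For the convergence, using subordination of the strong norm to the $L^1$ norm of vectors,
$$
\frac{1}{t}\|A_tv_t-A_tw_t\|_1
=\frac{1}{t}\|A_t(v_t-w_t)\|_1
\le \|A_t\|\,\frac{1}{t}\|v_t-w_t\|_1\;,
$$
which tends to $0$ since $\|A_t\|$ is bounded and $\frac{1}{t}\|v_t-w_t\|_1\to 0$ by the vector equivalence $v_t\sim w_t$.

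The only step requiring a moment's care is the inequality $\|Mv\|_1\le t\,|M|\,\|v\|_\infty$ used in the first assertion; it follows from $\|Mv\|_1\le\sum_{s,r}|M(s,r)|\,|v(r)|\le\|v\|_\infty\sum_{s,r}|M(s,r)|$. I expect no genuine obstacle: this lemma is the vector analogue of the product estimates in Lemma \ref{lem:AEmat}, and the matching of the $L^\infty$ and $L^1$ norms on vectors with the strong norm on matrices is precisely what makes both estimates go through cleanly.
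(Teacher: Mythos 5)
Your proof is correct and follows essentially the same route as the paper: the same subordination bounds $\|A_tv_t\|_\infty\leqslant\|A_t\|\,\|v_t\|_\infty$ for boundedness, and the same two estimates $\frac{1}{t}\|(A_t-B_t)v_t\|_1\leqslant\|v_t\|_\infty\,|A_t-B_t|$ and $\frac{1}{t}\|A_t(v_t-w_t)\|_1\leqslant\frac{1}{t}\|A_t\|\,\|v_t-w_t\|_1$ for convergence. Your explicit verification of the weak-norm inequality $\|Mv\|_1\leqslant t\,|M|\,\|v\|_\infty$ is a detail the paper leaves implicit, but it changes nothing in substance.
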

\begin{proof}
That $\|A_tv_t\|_\infty$, $\|B_tv_t\|_\infty$, $\|A_tw_t\|_\infty$ are
uniformly bounded comes from the fact that $\|\,\cdot\,\|$ is
subordinate to $\|\,\cdot\,\|_\infty$. Next for point $1$:
$$
\frac{1}{t} \|(A_t-B_t)v_t\|_1\leqslant \|v_t\|_\infty |A-B|\;.
$$ 
For point 2:
$$
\frac{1}{t} \|A_t(v_t-w_t)\|_1\leqslant 
\frac{1}{t} \|A_t\|\|v_t-w_t\|_1\;.
$$ 
\end{proof}
The relation between asymptotic equivalence of vectors and our goal is
the following.
\begin{lemm}
\label{lem:AEvec}
If $v_t\sim w_t$ and $u_t\sim z_t$, then
$$
\lim_{t\to +\infty} \frac{1}{t}\left(v^*_tu_t-w^*_tz_t\right) = 0\;.
$$
\end{lemm}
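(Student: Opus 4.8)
The plan is to reduce everything to the duality between the $\|\cdot\|_\infty$ and $\|\cdot\|_1$ norms, exactly mirroring the strong/weak norm inequality of Lemma \ref{lem:strongweak}. First I would split the difference by a single telescoping step, inserting and subtracting the mixed term $v^*_t z_t$:
$$
v^*_t u_t - w^*_t z_t = v^*_t(u_t - z_t) + (v_t - w_t)^* z_t\;.
$$
This isolates, in each summand, one factor that is uniformly bounded in $\|\cdot\|_\infty$ and one factor whose $\|\cdot\|_1$ norm is $o(t)$.

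Next I would estimate each summand by the elementary H\"older-type inequality $|a^* b| = \bigl|\sum_s a(s)\,b(s)\bigr| \leqslant \|a\|_\infty\,\|b\|_1$, which is the vector analogue of the bound used in Lemma \ref{lem:strongweak}. Applied to the first term this gives $\frac{1}{t}\bigl|v^*_t(u_t - z_t)\bigr| \leqslant \|v_t\|_\infty\cdot\frac{1}{t}\|u_t - z_t\|_1$, and applied to the second term $\frac{1}{t}\bigl|(v_t - w_t)^* z_t\bigr| \leqslant \|z_t\|_\infty\cdot\frac{1}{t}\|v_t - w_t\|_1$. By the definition of asymptotic equivalence of vectors, $\|v_t\|_\infty$ and $\|z_t\|_\infty$ are uniformly bounded, while $\frac{1}{t}\|u_t - z_t\|_1\to 0$ (from $u_t\sim z_t$) and $\frac{1}{t}\|v_t - w_t\|_1\to 0$ (from $v_t\sim w_t$). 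Hence both bounds tend to $0$, and the triangle inequality delivers the claim.

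There is essentially no obstacle here: the statement is the vector counterpart of Lemma \ref{lem:strongweak}, and the only point to watch is that the boundedness and the convergence hypotheses are paired correctly across the two norms, so that in each inner product the $\|\cdot\|_\infty$-bounded vector is multiplied against the vector controlled in $\frac{1}{t}\|\cdot\|_1$. The telescoping choice above ensures this pairing; the symmetric insertion of $w^*_t u_t$ would work equally well, provided one always keeps the uniformly bounded factor and the vanishing factor on opposite sides of each inner product.
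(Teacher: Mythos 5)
Your proof is correct and is essentially identical to the paper's: you insert the same mixed term $v^*_t z_t$, bound each piece by the duality inequality $|a^*b|\leqslant \|a\|_\infty\,\|b\|_1$, and invoke the boundedness and $\frac{1}{t}\|\cdot\|_1$-convergence hypotheses exactly as the authors do. Nothing to change.
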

\begin{proof}
\begin{eqnarray*}
\frac{1}{t}\left|v^*_tu_t-w^*_tz_t\right|
&\leqslant&
\frac{1}{t}\left(|v^*_t(u_t-z_t)|+|(v^*_t-w^*_t)z_t|\right)\\
&\leqslant&\frac{1}{t}
\left(\|v_t\|_\infty\,\|u_t-z_t\|_1
+
\|z_t\|_\infty\|v_t-w_t\|_1\right)\;.
\end{eqnarray*}
Hence the result.
\end{proof}
Using asymptotic equivalence, (\ref{liml0t}) and (\ref{liml1t}) can
easily be deduced from (\ref{liml0tstat}) and (\ref{liml1tstat}),
 for $0<\alpha<1/(2M)$. We
shall not detail the passage from (\ref{liml0tstat}) to
(\ref{liml0t}): see Theorem 4 p.~178 of \cite{Gray06}. Here is the
passage from (\ref{liml0tstat}) to
(\ref{liml0t}).
For all $\alpha<1/(2M)$, it follows from (\ref{equivKH}) by
point 1 of Lemma \ref{lem:AEmatvec} that
$$
(I_t+2\alpha K_t)^{-1} c_t
\,\sim\,
(I_t+2\alpha H_t)^{-1} c_t\;.
$$
By point 2 of Lemma \ref{lem:AEmatvec}:
$$
(I_t+2\alpha K_t)^{-1} m_{t}
\,\sim\,
(I_t+2\alpha H_t)^{-1} c_t\;.
$$
Lemma
\ref{lem:AEvec} implies:
$$
\lim_{t\to +\infty}\frac{1}{t}
m^*_{t}(I_t+2\alpha K_t) m_{t} = 
\lim_{t\to +\infty}\frac{1}{t}
c_t^*(1+2\alpha H_t)^{-1}c_t\;. 
$$
Hence (\ref{liml1t}).
\vskip 2mm\noindent
Still using asymptotic equivalence, it will now be shown that Proposition
\ref{prop:main} is just a particular case of Theorem \ref{th:main}. Indeed, 
consider the Gaussian process $X^x$ with mean
\begin{equation}
\label{mx}
m_x(t)=\EE[X^x_t]=m(t)+\frac{K(0,t)}{K(0,0)}(x-m(t))\;,
\end{equation}
and covariance function
\begin{equation}
\label{Kbullet}
K^\bullet(t,s)=\EE[(X^x_t-m_x(t))(X^x_s-m_x(s))] = 
K(t,s)-\frac{K(t,0)K(s,0)}{K(0,0)}\;.
\end{equation}
The distribution of $(X^x_t)_{t\in\NN}$ and the 
conditional distribution of $(X_t)_{t\in\NN}$
given $X_0=x$ are the same.
Denote by $m_{x,t}$ and $K^\bullet_t$ the mean and
covariance matrix of $(X^x_s)_{s=0,\ldots,t-1}$. Theorem \ref{th:main}
applies to $X^x$, provided it is proved that $m_{x,t}\sim c_t$ and
$K^\bullet_t\sim H_t$.  By (H1) and
(H2), $\|m_{x,t}\|_\infty$
is uniformly bounded. Moreover from (\ref{mx}),
$$
\frac{1}{t} \|m_{x,t}-m_t\|_1
\leqslant
\frac{|x|+\|m_t\|_\infty}{tK(0,0)}\sum_{s=0}^{t-1}|K(0,s)|
\leqslant
\frac{|x|+\|m_t\|_\infty}{tK(0,0)}\|K_t\|\;,
$$
thus $m_{x,t}\sim m_t$, hence $m_{x,t}\sim c_t$ by transitivity.
Now from (\ref{Kbullet}),
$$
\|K^\bullet_t\|\leqslant \|K_t\|+ 
\max_{r=0}^{t-1}\frac{|K(0,r)|}{K(0,0)}\sum_{s=0}^{t-1}K(0,s)
\leqslant \|K_t\|+
\frac{\|K_t\|^2}{K(0,0)}\;.
$$
Moreover,
$$
|K^\bullet_t-K_t|
\leqslant
\frac{1}{tK(0,0)}\left(\sum_{s=0}^{t-1} |K(0,s)|\right)^2
\leqslant
\frac{\|K_t\|^2}{tK(0,0)}\;,
$$
thus $K^\bullet_t\sim K_t$, hence $K^\bullet_t\sim H_t$ by
transitivity (point 1 of Lemma \ref{lem:AEmat}).
\vskip 2mm\noindent
This section will end with another illustration of asymptotic
equivalence, which is of independent interest and yields an
alternative proof of  (\ref{liml1tstat}).
\begin{prop}
\label{prop:ct}
Let $F$ be analytic with radius of convergence $R>\sum_t|k(t)|$. 
Denote by $d_t^{(\lambda)}$ the vector 
$
d_t^{(\lambda)} = (\ee^{-\ii \lambda s})_{s=0,\ldots,t-1}
$.
 Then:
\begin{equation}
\label{equivHct}
F(H_t) d_t^{(\lambda)}\sim F(f(\lambda)) d_t^{(\lambda)}\;.
\end{equation}
\end{prop}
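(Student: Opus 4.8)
The plan is to reduce everything to the single approximate-eigenvector relation $H_t d_t^{(\lambda)}\sim f(\lambda)\,d_t^{(\lambda)}$ and then propagate it through powers and through the analytic function $F$ using the calculus of asymptotic equivalence already set up. Throughout, write $M=\sum_{t\in\ZZ}|k(t)|$, so that $\|H_t\|\leqslant M<R$ and $|f(\lambda)|\leqslant M<R$ for every $t$ and every $\lambda$. These bounds dispatch the first clause of asymptotic equivalence at once: since $\|\,\cdot\,\|$ is subordinate to $\|\,\cdot\,\|_\infty$ and $\|d_t^{(\lambda)}\|_\infty=1$, we get $\|F(H_t)d_t^{(\lambda)}\|_\infty\leqslant\|F(H_t)\|\leqslant\sum_k|a_k|M^k<+\infty$, while $\|F(f(\lambda))d_t^{(\lambda)}\|_\infty=|F(f(\lambda))|$ is a fixed constant. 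Note that one cannot argue through matrix asymptotic equivalence of $H_t$ and $f(\lambda)I_t$: those are \emph{not} equivalent in general, and it is precisely the special structure of $d_t^{(\lambda)}$ as an approximate eigenvector that makes the statement true.

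First I would prove the base case. Expanding $(H_t d_t^{(\lambda)})_s=\sum_{r=0}^{t-1}k(s-r)\ee^{-\ii\lambda r}$ and setting $u=s-r$ gives $(H_t d_t^{(\lambda)})_s=\ee^{-\ii\lambda s}\sum_{u=s-t+1}^{s}\ee^{\ii\lambda u}k(u)$, to be compared with $f(\lambda)\ee^{-\ii\lambda s}=\ee^{-\ii\lambda s}\sum_{u\in\ZZ}\ee^{\ii\lambda u}k(u)$. Hence the modulus of the $s$-th coordinate of the difference is at most $\sum_{u\notin[s-t+1,s]}|k(u)|$. Summing over $s$ and counting, for each fixed $u$, the number of windows $[s-t+1,s]$ with $0\leqslant s\leqslant t-1$ that miss $u$ (which equals $\min(|u|,t)$), I obtain
\[
\frac{1}{t}\bigl\|H_t d_t^{(\lambda)}-f(\lambda)\,d_t^{(\lambda)}\bigr\|_1\leqslant\frac{1}{t}\sum_{u\in\ZZ}|k(u)|\min(|u|,t)\;.
\]
Because $\min(|u|,t)/t\leqslant 1$ and tends to $0$ for each fixed $u$, dominated convergence against the summable weight $|k(u)|$ forces the right-hand side to $0$, establishing $H_t d_t^{(\lambda)}\sim f(\lambda)\,d_t^{(\lambda)}$.

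Next I would bootstrap to monomials by induction. Assuming $H_t^n d_t^{(\lambda)}\sim f(\lambda)^n d_t^{(\lambda)}$, point 2 of Lemma \ref{lem:AEmatvec} (with $\|H_t\|\leqslant M$ uniformly bounded) gives $H_t^{n+1}d_t^{(\lambda)}=H_t(H_t^n d_t^{(\lambda)})\sim H_t(f(\lambda)^n d_t^{(\lambda)})=f(\lambda)^n(H_t d_t^{(\lambda)})$; the base case, stability of $\sim$ under multiplication by a fixed scalar, and transitivity then yield $f(\lambda)^n H_t d_t^{(\lambda)}\sim f(\lambda)^{n+1}d_t^{(\lambda)}$. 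Additivity of $\sim$ (the triangle inequality for the weak quantity, as in point 2 of Lemma \ref{lem:AEmat}) upgrades this to the polynomial truncations $F_N(H_t)d_t^{(\lambda)}\sim F_N(f(\lambda))d_t^{(\lambda)}$, where $F_N(z)=\sum_{k=0}^{N}a_k z^k$.

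The final and delicate step is the interchange of $N\to+\infty$ with $t\to+\infty$, carried out exactly as in the proof of point 4 of Lemma \ref{lem:AEmat}. I would split
\begin{align*}
\frac{1}{t}\bigl\|F(H_t)d_t^{(\lambda)}-F(f(\lambda))d_t^{(\lambda)}\bigr\|_1
&\leqslant\frac{1}{t}\bigl\|(F-F_N)(H_t)d_t^{(\lambda)}\bigr\|_1\\
&\quad+\frac{1}{t}\bigl\|F_N(H_t)d_t^{(\lambda)}-F_N(f(\lambda))d_t^{(\lambda)}\bigr\|_1+\bigl|F_N(f(\lambda))-F(f(\lambda))\bigr|\;,
\end{align*}
using $\tfrac1t\|A d_t^{(\lambda)}\|_1\leqslant\|A\|\,\|d_t^{(\lambda)}\|_\infty$ for the first term and $\tfrac1t\|d_t^{(\lambda)}\|_1=1$ for the third. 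The first and third terms are then bounded respectively by $\sum_{k>N}|a_k|M^k$ and $|F_N(f(\lambda))-F(f(\lambda))|$, both independent of $t$ and made smaller than $\epsilon/3$ by fixing $N$ large; with $N$ so fixed, the middle term tends to $0$ as $t\to+\infty$ by the polynomial case. This is the only genuine obstacle, and it is where the strict inequality $R>\sum_t|k(t)|$ is indispensable: the margin is exactly what makes the power-series tail uniformly small in $t$, everything else being the approximate-eigenvector computation and the already-established rules for $\sim$.
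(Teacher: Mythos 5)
Your proof is correct and follows essentially the same route as the paper's: the same coordinatewise computation establishing the base case $H_t d_t^{(\lambda)}\sim f(\lambda)d_t^{(\lambda)}$ (your window-counting plus dominated convergence is just a Fubini-rearranged version of the paper's bound by Ces\`aro means of the tail sums $\delta_\pm$), then induction to polynomials via the triangle inequality, and finally the same tail-of-the-power-series truncation argument as in point 4 of Lemma \ref{lem:AEmat}, using $R>\sum_t|k(t)|$ to make the truncation error uniform in $t$. No gaps.
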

The function $(\ee^{-\ii \lambda s})_{s\in\ZZ}$ is an eigenfunction of
the Toeplitz operator $H$ with symbol $k$, associated to the eigenvalue
$f(\lambda)$. Thus Proposition \ref{prop:ct} is closely related to
Szeg\H{o}'s theorem: compare with Theorem 5.9
p.~137 of \cite{BottcherSilbermann99}. Notice that $c_t=m_\infty
d_t^{(0)}$: in the particular case
$\lambda=0$ and $F(z)=(1+2\alpha z)^{-1}$, one gets:
$$
(1+2\alpha H_t)^{-1} c_t \sim (1+2\alpha f(0))^{-1} c_t\;,
$$
from which (\ref{liml1tstat}) follows, through Lemma
\ref{lem:AEvec}. If instead of being constant, the asymtotic mean is
periodic, Proposition \ref{prop:ct} still gives an explicit
expression of $\ell_1(\alpha)$. As an example, assume
$m(t)=(-1)^tm_\infty$. Then (\ref{liml1tstat}) holds with: 
$$
\ell_1(\alpha) =
m_\infty^2\alpha(1+2\alpha f(\pi))^{-1}\;. 
$$
\begin{proof}
We first prove (\ref{equivHct}) for $F(z)=z$.
Using the fact that $\|\,\cdot\,\|$ is subordinate to
$\|\,\cdot\,\|_\infty$, 
$$
\|H_t d_t^{(\lambda)}\|_\infty \leqslant \|H_t\|\;.
$$
Therefore $\|H_t d_t^{(\lambda)}\|_\infty$ is uniformly bounded.
For $s=0,\ldots,t-1$, consider the coordinate with index $s$ of
$f(\lambda) d_t^{(\lambda)} - H_td_t^{(\lambda)}$:
\begin{eqnarray*}
\ee^{-i\lambda s}\sum_{r\in\ZZ} k(r)\ee^{\ii\lambda r}
- \sum_{r=0}^{t-1} k(r-s)\ee^{-\ii\lambda r}
&=&
\ee^{-i\lambda s}\sum_{r\in\ZZ} k(r)\ee^{-\ii\lambda r} -
\ee^{-i\lambda s}\sum_{r'=-s}^{t-s-1} k(r')\ee^{-\ii\lambda r'}\\[2ex]
&=&
\ee^{-i\lambda s}
\sum_{r=-\infty}^{-s-1} k(r)\ee^{-\ii\lambda r}
+
\ee^{-\ii\lambda s}\sum_{r=t-s}^{+\infty} k(r)\ee^{-\ii\lambda r}\;.
\end{eqnarray*}
Denote
$$
\delta_-(s)=\sum_{r=-\infty}^{s}k(r)\ee^{-\ii\lambda r}
\quad\mbox{and}\quad
\delta_+(s)=\sum_{r=s}^{+\infty}k(r)\ee^{-\ii\lambda r}\;.
$$
Observe, due to the symmetry of $k$, that $\delta_-(-s)=\overline{\delta_+(s)}$
Thus:
\begin{eqnarray*}
\|f(0) c_t - H_tc_t\|_1
&=&\displaystyle{
\sum_{s=0}^{t-1}
|\delta_-(-s-1)+\delta_+(t-s)|}\\[2ex]
&\leqslant&\displaystyle{
\sum_{s=0}^{t-1}
|\delta_-(-s-1)|
+\sum_{s=0}^{t-1}
|\delta_+(t-s)|
}\\[2ex]
&=&\displaystyle{
2
\sum_{s=1}^{t}
|\delta_+(s)|\;.}
\end{eqnarray*}
The sequence $(|\delta^+(s)|)_{s\in\NN}$ tends to $0$, as a
consequence of the summability of $k$ (H3). Therefore it also tends to
zero in the Ces\`aro sense. Hence the result.

By induction, using the triangle inequality, (\ref{equivHct}) holds for
any polynomial $F_n$.  The rest of the proof is the same as that of
point 4 in Lemma \ref{lem:AEmat}.
\end{proof}

\section{Asymptotic distributions}
\label{AD}
The results of the two previous sections establish that the conclusion
of Theorem \ref{th:main} holds for a small enough $\alpha$. 
To finish the proof, 
the convergence must be extended to all $\alpha\geqslant 0$. 
The following variant of L\'evy's continuity theorem applies
(see Chapter 4 of \cite{Kallenberg97}, in particular Exercise 9 p.~78).
\begin{lemm}
\label{lem:levy}
Let $\pi, \pi_1, \pi_2,\ldots,$ be probability measures on $\RR^+$. Assume
that for some $\alpha_0>0$, and all $\alpha\in[0,\alpha_0[$,
$$
\lim_{n\to\infty} \int_0^{+\infty} \ee^{-\alpha x}\,\dd \pi_n(x)
=
\int_0^{+\infty} \ee^{-\alpha x}\,\dd \pi(x)\;.
$$
Then $(\pi_n)$ converges weakly to $\pi$ and the convergence holds for
all $\alpha\geqslant 0$.
\end{lemm}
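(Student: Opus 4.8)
The plan is to transfer the problem to the one-point compactification $\bar\RR^+=\RR^+\cup\{+\infty\}$, which is a compact metric space on which every function $x\mapsto\ee^{-\alpha x}$ with $\alpha>0$ extends continuously by giving it the value $0$ at $+\infty$ (and the value $1$ everywhere when $\alpha=0$). Viewing $\pi$ and the $\pi_n$ as Borel probability measures on $\bar\RR^+$, the first step is to invoke weak-$*$ sequential compactness of the space of probability measures on this compact metric space: every subsequence of $(\pi_n)$ admits a further subsequence converging weakly on $\bar\RR^+$ to some probability measure $\tilde\pi$.

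The second step is to identify every such subsequential limit with $\pi$. Along a convergent subsequence $(\pi_{n_k})$ and for each fixed $\alpha\in\,]0,\alpha_0[$, continuity of $x\mapsto\ee^{-\alpha x}$ on $\bar\RR^+$ gives
$$
\int_{\RR^+}\ee^{-\alpha x}\,\dd\tilde\pi(x)
=\int_{\bar\RR^+}\ee^{-\alpha x}\,\dd\tilde\pi(x)
=\lim_{k\to\infty}\int_{\RR^+}\ee^{-\alpha x}\,\dd\pi_{n_k}(x)
=\int_{\RR^+}\ee^{-\alpha x}\,\dd\pi(x)\;,
$$
where the first equality uses $\ee^{-\alpha\cdot(+\infty)}=0$. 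Thus the restriction $\tilde\pi|_{\RR^+}$ and $\pi$ are finite measures on $\RR^+$ sharing the same Laplace transform on the interval $]0,\alpha_0[$. Since a Laplace transform is analytic on $\{\mathrm{Re}>0\}$, this agreement extends to all of $]0,+\infty[$ by the identity theorem, and the uniqueness theorem for Laplace transforms then forces $\tilde\pi|_{\RR^+}=\pi$. As $\pi$ has total mass $1$, so does $\tilde\pi|_{\RR^+}$, whence $\tilde\pi(\{+\infty\})=0$ and $\tilde\pi=\pi$.

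The third step concludes. Since every subsequence of $(\pi_n)$ has a further subsequence converging to the \emph{same} limit $\pi$, the whole sequence converges weakly to $\pi$ on $\bar\RR^+$. Because the limit charges no mass at $+\infty$, this upgrades to weak convergence on $\RR^+$: for bounded continuous $f$ on $\RR^+$ one approximates $f$ by $f\chi$, with $\chi\in C(\bar\RR^+)$ a cutoff equal to $1$ on $[0,R]$ and $0$ near $+\infty$, and controls the error via the tightness bound $\limsup_n\pi_n([R,+\infty[)\leqslant\pi([R,+\infty[)$, which tends to $0$ as $R\to+\infty$. Finally, for every $\alpha\geqslant 0$ the map $x\mapsto\ee^{-\alpha x}$ is bounded and continuous on $\RR^+$, so weak convergence immediately yields convergence of the Laplace transforms for all $\alpha\geqslant 0$, which is the second assertion.

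The main obstacle is the control of mass at infinity: the exponentials $\ee^{-\alpha x}$ with $\alpha>0$ are blind to any mass escaping to $+\infty$, so convergence of Laplace transforms alone cannot \emph{a priori} exclude a defect of mass in the limit. Passing to the compactification converts a potential loss of mass into a genuine atom of $\tilde\pi$ at $+\infty$, and the crux of the argument is to show that this atom is absent; this is exactly what the matching of Laplace transforms, combined with the normalization $\tilde\pi(\bar\RR^+)=1$, delivers.
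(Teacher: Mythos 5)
Your proof is correct, and it is worth pointing out that the paper does not actually prove this lemma: it is stated with a pointer to the literature (Chapter 4 of Kallenberg, in particular Exercise 9 p.~78), so your argument is a self-contained substitute for that citation. The textbook route behind the reference derives tightness of $(\pi_n)$ from the convergence of the Laplace transforms on a neighbourhood of $0$, using that the limit transform is that of a probability measure, and then combines Prokhorov's theorem with uniqueness of Laplace transforms. Your compactification argument is the same idea in different clothing: a possible escape of mass to infinity, which the exponentials $\ee^{-\alpha x}$ with $\alpha>0$ cannot detect, is converted into an atom of the subsequential limit at $+\infty$, and the normalization $\pi(\RR^+)=1$ is exactly what kills that atom --- the same role the probability-measure assumption plays in the tightness estimate of the classical proof. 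What your version buys is self-containedness and economy: sequential compactness of probability measures on the compact metric space $\bar{\RR}^+$ replaces Helly/Prokhorov, and no quantitative tightness bound is needed. The two delicate steps are both handled correctly: the identification of subsequential limits, where agreement of the transforms on $]0,\alpha_0[$ is propagated to all of $]0,+\infty[$ by analyticity on the right half-plane before invoking uniqueness (note that the span of $\{\ee^{-\alpha x},\ \alpha<\alpha_0\}$ is not an algebra, so this continuation step cannot be skipped); and the upgrade from weak convergence on $\bar{\RR}^+$ to weak convergence on $\RR^+$, where the portmanteau inequality $\limsup_n \pi_n([R,+\infty[)\leqslant\pi([R,+\infty[)$ for the closed set $[R,+\infty]$ supplies the required control of the cutoff error.
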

To apply this lemma, one has to check that $(L_t(\alpha))^{1/t}$ and 
$\ee^{-\ell(\alpha)}$ 
are the Laplace transforms of probability distributions on
$\RR^+$.
%
%
It turns out that in our case, the function $L_{t}(\alpha)$ 
defined by (\ref{def:Lt})
is the Laplace transform of an infinitely divisible distribution, thus
so are  $(L_{t}(\alpha))^{1/t}$ and its limit.
We give here the probabilistic interpretation of
$\ee^{-\ell_0(\alpha)}$ and 
$\ee^{-\ell_1(\alpha)}$
as the Laplace transforms
of two infinitely divisible distributions. 
Next, the particular case of a Gauss-Markov process
will be considered.

Through an orthogonal transformation diagonalizing its covariance
matrix, the squared norm of any Gaussian vector can be written as the
sum of independent random variables, each being the square of
a Gaussian  variable, 
thus having noncentral chi-squared distribution. If $Z$ is Gaussian
with mean $\mu$ and variance $v$, the Laplace transform of $Z^2$ is:
$$
\phi(\alpha)=(1+2\alpha v)^{-1/2} \exp(-\mu^2\alpha/(1+2\alpha v))\;.
$$
The first factor is the Laplace transform of the Gamma distribution
with shape parameter $1/2$ and scale parameter $2v$. Assuming
$\mu$ and $v$ non null, rewrite the
second factor as:
$$
\exp\left(-\frac{\mu^2}{2v}\left(1-(1+2\alpha v)^{-1}\right)\right)\;.
$$
This is the Laplace transform of a Poisson compound, of the exponential
with expectation $2v$, by the Poisson distribution with rate
$\frac{\mu^2}{2v}$. Therefore, the squared norm of a Gaussian
vector has an infinitely divisible distribution, which is a convolution 
of Gamma distributions with Poisson compounds of exponentials. 
Squared Gaussian vectors 
have received a lot of attention, since even in
dimension $2$, the mean and covariance matrix must satisfy certain
conditions for the distribution of the vector to be infinitely
divisible \cite{MarcusRosen09}. 
Yet the sum of coordinates of such a vector always has an
infinitely divisible distribution. 

For all $t$, the distribution with Laplace transform 
$(L_{t}(\alpha))^{1/t}$ is the convolution of Gamma distributions with
Poisson compounds of exponentials. As $t$ tends to infinity,
$(L_{t}(\alpha))^{1/t}$ tends to $\ee^{-\ell_0(\alpha)}\,\ee^{-\ell_1(\alpha)}$.
The first factor $\ee^{-\ell_0(\alpha)}$ is the Laplace transform of a
limit of convolutions
of Gamma distributions, which belongs to the Thorin class 
$T(\RR^+)$ (see \cite{Bondesson92} as a general
reference). 
Consider now $\ee^{-\ell_1(\alpha)}$. Rewrite $\ell_1(\alpha)$ as:
\begin{eqnarray*}
\ell_1(\alpha) &=& 
m_\infty^2\alpha \left(1+2 \alpha f(0)\right)^{-1}\\
&=&
\frac{m_\infty^2}{2 f(0)}\left(1-(1+2\alpha f(0))^{-1}\right)\;.
\end{eqnarray*}
Thus $\ee^{-\ell_1(\alpha)}$ is the Laplace transform of a Poisson compound, of the
exponential distribution with expectation $2 f(0)$, by the
Poisson distribution with parameter $\frac{m_\infty^2}{2 f(0)}$.

As an illustrating example, consider the Gauss-Markov process defined as follows.
Let $\theta$ be a real such that $-1<\theta<1$. Let
$(\varepsilon_t)_{t\geqslant 1}$ be a sequence of i.i.d. standard Gaussian
random variables. Let $Y_0$, independent from the sequence
$(\varepsilon_t)_{t\geqslant 1}$, follow the normal
$\mathcal{N}(0,(1-\theta^2)^{-1})$ distribution. For all $t\geqslant
1$ let:
$$
Y_t = \theta Y_{t-1}+\varepsilon_t\;. 
$$
Thus $(Y_t)_{t\in\NN}$ 
is a stationary centered auto-regressive process.  
Consider the noncentered process
$(X_t)_{t\in\NN}$, with $X_t=Y_t+m_\infty$. 
This is the case considered in \cite{Kleptsynaetal14}, where a
stronger result was proved. Formula (10) p.~72 of that reference
matches (\ref{l0}) and (\ref{l1}) here. Indeed, the spectral density is:
$$
f(\lambda) = \frac{1}{1+\theta^2-2\theta \cos(\lambda)}\;.
$$
Write $\ell_0(\alpha)$  as a contour integral over the unit
circle.
$$
\ell_0(\alpha)=\frac{1}{4\pi}\int_0^{2\pi} \log(1+2\alpha
f(\lambda))\,\dd\lambda
=
\frac{1}{4\pi \ii}\oint_{|\zeta|=1} \frac{1}{\zeta}
\log\left(1+\frac{2\alpha}{1+\theta^2-\theta(\frac{1}{\zeta}+\zeta)}\right)
\,\dd\zeta\;.
$$ 
Now:
$$
1+\frac{2\alpha}{1+\theta^2-\theta(\frac{1}{\zeta}+\zeta)}
=\frac{\zeta^2-(\theta+\frac{1}{\theta}+\frac{2\alpha}{\theta})\zeta+1}
{\zeta^2-(\theta+\frac{1}{\theta})\zeta+1}\;.
$$
Observe that the two roots of the numerator have the same sign as $\theta$, and
their product is
$1$. Denote them by $\zeta^-$ and $\zeta^+$, so that
$0<|\zeta^-|<1<|\zeta^+|$. The two roots of the denominator are $\theta$
and $\frac{1}{\theta}$. The function to be integrated has 5 poles,
among which $0, \theta, \zeta^-$ are inside the unit disk, $\frac{1}{\theta},
\zeta^+$ are outside. Rewrite $\ell_0$ as:
$$
\ell_0(\alpha)=
\frac{1}{4\pi \ii}\oint_{|\zeta|=1} \frac{1}{\zeta}
\log\left(\frac{\zeta-\zeta^{-}}{\zeta-\theta}\right)
\,\dd\zeta
+
\frac{1}{4\pi \ii}\oint_{|\zeta|=1} \frac{1}{\zeta}
\log\left(\frac{\zeta-\zeta^{+}}{\zeta-\frac{1}{\theta}}\right)
\,\dd\zeta\;.
$$ 
The first integral is null, since
$$
\oint_{|\zeta|=1} \frac{1}{\zeta}
\log\left(\zeta-\zeta^{-}\right)
\,\dd\zeta = 
\oint_{|\zeta|=1} \frac{1}{\zeta}
\log\left(\zeta-\theta\right)
\,\dd\zeta\;,
$$
the two functions having the same residues inside the unit disk.
The second integral is:
$$
\frac{1}{4\pi \ii}\oint_{|\zeta|=1} \frac{1}{\zeta}
\log\left(\frac{\zeta-\zeta^{+}}{\zeta-\frac{1}{\theta}}\right)
\,\dd\zeta
=\frac{1}{2}\log(\theta \zeta^+)\;.
$$
Therefore:
$$
\ell_0(\alpha)= \frac{1}{2}\log(\theta \zeta^+)
=\frac{1}{2}\log\left(\frac{1}{2}\left(
\theta^2+1+2\alpha
+\sqrt{((\theta+1)^2+2\alpha)((\theta-1)^2+2\alpha)}
\right)\right)
\;.
$$
The expression of $\ell_1$ is:
$$
\ell_1(\alpha) = \frac{m_\infty^2\alpha
  (1-\theta)^2}{(1-\theta)^2+2\alpha}\;.
$$
It turns out that the probability distribution with Laplace transform
$$
\ee^{-\ell_0(\alpha)} =
\left(\frac{1}{2}\left(
\theta^2+1+2\alpha
+\sqrt{((\theta+1)^2+2\alpha)((\theta-1)^2+2\alpha)}
\right)\right)^{-1/2}\;, 
$$
has an explicit density $f_0(x)$ defined on $(0,+\infty)$, which is
related to the modified Bessel function of the first kind, with order $1/2$ 
(compare with formula (3.10) p.~437 in \cite{Feller71}).
\begin{eqnarray*}
f_0(x) &=& 
\ee^{-\frac{1+\theta^2}{2}x} \left(2^{-1}|\theta|^{-1/2} x^{-1}
I_{1/2}(|\theta|x)\right)\\
&=&
\ee^{-\frac{1+\theta^2}{2}x} \left((2\pi)^{-1/2}|\theta|^{-1} x^{-3/2}
\sinh(|\theta|x)\right)\;.
\end{eqnarray*}
%

\end{document}